\documentclass{amsart}
\usepackage{amsmath, amssymb, amscd, mathtools}

\newtheorem{theorem}{Theorem}[section]
\newtheorem{proposition}[theorem]{Proposition}
\newtheorem{lemma}[theorem]{Lemma}
\newtheorem{corollary}[theorem]{Corollary}
\newtheorem{conjecture}[theorem]{Conjecture}
\newtheorem{question}[theorem]{Question}

\theoremstyle{definition}

\newtheorem{example}[theorem]{Example}
\newtheorem{remark}[theorem]{Remark}

\numberwithin{equation}{section}

\newcommand{\N}{\mathbb{N}}                        % natural numbers
\newcommand{\K}{\mathbb{K}}                        % field K
\newcommand{\R}{\mathbb{R}}                        % real numbers
\newcommand{\C}{\mathbb{C}}                        % complex numbers
\newcommand{\vp}{\varphi}                          % mapping phi
\newcommand{\MM}{\mathcal{M}}
\newcommand{\VV}{\mathcal{V}}                      % zero set germ
                      % sheaf module F
\newcommand{\OO}{\mathcal{O}}                      % structure sheaf
\newcommand{\XX}{\mathfrak{X}}                     % total space X of a deformation
\newcommand{\fbd}{\mathrm{fbd}}                    % fibre dimension
\newcommand{\OXxi}{{\mathcal{O}}_{X,\xi}}          % local ring of X at xi
        % local ring of Y at eta
                         % germ of X at xi
                       % germ of Y at eta
                     % germ of Z at zeta
                       % n'th fibre power of X
                  % n'th fibre power of xi
                      % germ of phi at xi
                     % n'th fibre power of phi
              % analytic tensor product
             % analytic tensor product over R
        % n'th analytic tensor power of F over R
                     % ideal p
                    % annihilator ideal
                % prime spectrum
            % Specan of a local analytic $\C$-algebra
\newcommand{\supp}{\mathrm{supp}}                  % support of a q-tuple of power series
\newcommand{\inform}{\mathrm{in}}                  % initial form of a q-tuple of power series
\newcommand{\inexp}{\mathrm{exp}}                  % initial exponent of a q-tuple of power series
\newcommand{\Pt}{\Phi^\theta}
\newcommand{\NN}{\mathfrak{N}}                     % diagram of initial exponents
\newcommand{\mm}{\mathfrak{m}}                     % maximal ideal

\begin{document}

\title{Finite determinacy and stability of flatness of analytic mappings}

\author{Janusz Adamus}
\address{Department of Mathematics, The University of Western Ontario, London, Ontario, Canada N6A 5B7 -- and --
         Institute of Mathematics, Polish Academy of Sciences, ul. {\'S}niadeckich 8, 00-956 Warsaw, Poland}
\email{jadamus@uwo.ca}
\author{Hadi Seyedinejad}
\address{Department of Mathematical Sciences, University of Kashan, Kashan, Iran 8731753153}
\email{sseyedin@alumni.uwo.ca}
\thanks{J. Adamus's research was partially supported by the Natural Sciences and Engineering Research Council of Canada}

\subjclass[2010]{58K40, 58K25, 32S05, 58K20, 32S30, 32B99, 32C05, 13B40}
\keywords{finite determinacy, stability, flatness, openness, complete intersection}

\begin{abstract}
It is proved that flatness of an analytic mapping germ from a complete intersection is determined by its sufficiently high jet. As a consequence, one obtains finite determinacy of complete intersections. It is also shown that flatness and openness are stable under deformations.
\end{abstract}
\maketitle

%%%%%%%%%%%%%%%%%%%%%%%%%%%%%%%%%%%%%%%%%%%%%%%%%%
%%%%%%%%%%%%%%%%% Section %%%%%%%%%%%%%%%%%%%%%%%%
%%%%%%%%%%%%%%%%%%%%%%%%%%%%%%%%%%%%%%%%%%%%%%%%%%
\section{Introduction}
\label{sec:intro}

When dealing with singularities of analytic sets or mappings, particularly in explicit calculations, one is often tempted to forget the original infinite transcendental data and to work instead with its (sufficiently long) Taylor truncation. This approach is satisfactory in many circumstances. For example, the Milnor number of an isolated hypersurface singularity can be correctly calculated this way.
In general, however, local analytic invariants of a given singularity may differ from those of its Taylor approximations of arbitrary length (cf. Example~\ref{ex:no-equality} and Remark~\ref{rem:same-as-HS-function}).

In the present paper we show that, roughly speaking, those algebro-geometric properties of an analytic mapping-germ $\vp=(\vp_1,\dots,\vp_n)$ that depend on the variation of its fibres are determined already by Taylor polynomials of $\vp_1,\dots,\vp_n$ of sufficiently large degree.

%%%%%%%%%%%%%%%%%%%%%%%%%%%%%%%%%%%%%%%%%%%%%%%%%%
\subsection{Main results}

Let $\K=\R$ or $\C$. Let $x=(x_1,\dots,x_m)$ and let $\mm_x$ denote the maximal ideal in the ring of convergent power series $\K\{x\}$. For a natural number $k\in\N$ and a power series $f\in\K\{x\}$, the \emph{$k$-jet} of $f$, denoted $j^k f$, is the image of $f$ under the canonical epimorphism $\K\{x\}\to\K\{x\}/\mm_x^{k+1}$. For an $s$-tuple $\vp=(\vp_1,\dots,\vp_s)\in\K\{x\}^s$, we set $j^k\vp=(j^k\vp_1,\dots,j^k\vp_s)$.

An analytic mapping $\vp:X\to Y$ is called \emph{flat} at a point $\xi\in X$ when the pullback homomorphism $\vp^*_\xi:\OO_{Y,\vp(\xi)}\to\OO_{X,\xi}$ makes $\OO_{X,\xi}$ into a flat $\OO_{Y,\vp(\xi)}$-module.
One of the major problems considered in this paper is whether flatness of a $\K$-analytic mapping germ is finitely determined (i.e., determined by its $k$-jet for $k$ large enough). We prove that this is indeed the case for mappings from complete intersections. More precisely, we have the following.

\begin{theorem}
\label{thm:flat-fin-det-comp-int}
Let $X$ be a $\K$-analytic subspace of $\K^m$. Suppose that $0\in X$ and the local ring $\OO_{X,0}$ is a complete intersection.
If $\vp=(\vp_1,\dots,\vp_n):X\to\K^n$, $\vp(0)=0$, is a $\K$-analytic mapping, then there exists $\mu_0\geq1$ such that the following conditions are equivalent:
\begin{itemize}
\item[(i)] $\vp$ is flat at zero.
\item[(ii)] For every $\mu\geq\mu_0$, $j^\mu\vp=(j^\mu\vp_1,\dots,j^\mu\vp_n):X\to\K^n$ is flat at zero.
\item[(iii)] There exists $\mu\geq\mu_0$ such that $j^\mu\vp=(j^\mu\vp_1,\dots,j^\mu\vp_n):X\to\K^n$ is flat at zero.
\end{itemize}
\end{theorem}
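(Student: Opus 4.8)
The plan is to convert flatness into a numerical condition on fibre dimension, which is legitimate because a complete intersection is Cohen--Macaulay, and then to prove that this numerical condition is insensitive to truncation of sufficiently high order.

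First I would dispose of the real case by complexification. Passing to the complexification $X^{\C}$ of $X$ at $0$ preserves the complete intersection property, the map $\OO_{X,0}\to\OO_{X^{\C},0}$ is faithfully flat, so $\vp$ is flat at $0$ if and only if its complexification is, and complexification commutes with taking jets; hence we may assume $\K=\C$. Now, since $\OO_{X,0}$ is a complete intersection it is Cohen--Macaulay, and the target $\C^n$ is regular, so the local criterion of flatness by fibres (``miracle flatness'') applies: any $\C$-analytic germ $\psi=(\psi_1,\dots,\psi_n)\colon(X,0)\to(\C^n,0)$ is flat at $0$ if and only if $\dim_0\psi^{-1}(0)=\dim_0 X-n$, i.e. $\dim\bigl(\OO_{X,0}/(\psi_1,\dots,\psi_n)\OO_{X,0}\bigr)=\dim\OO_{X,0}-n$. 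By Krull's height theorem the left-hand side is always at least $\dim\OO_{X,0}-n$, so flatness of $\psi$ is equivalent to the single inequality $\dim_0\psi^{-1}(0)\le\dim_0 X-n$. Applying this to $\psi=\vp$ and to $\psi=j^\mu\vp$, the theorem is reduced to: there exists $\mu_0$ such that $\dim_0 V(X;j^\mu\vp_1,\dots,j^\mu\vp_n)=\dim_0 V(X;\vp_1,\dots,\vp_n)$ for every $\mu\ge\mu_0$, where $V(X;\cdot)$ denotes the germ of the common zero set in $X$.

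One half of this is soft and yields the implications (i)$\Rightarrow$(ii)$\Rightarrow$(iii). Write $A=\OO_{X,0}$, $\mm=\mm_{X,0}$, $e=\dim_0 V(X;\vp)$, and pick $e$ sufficiently general linear forms $\ell_1,\dots,\ell_e$ on $\C^m$ so that $\mathfrak a:=(\vp_1,\dots,\vp_n,\ell_1,\dots,\ell_e)A$ is $\mm$-primary; fix $N$ with $\mm^N\subseteq\mathfrak a$. For $\mu\ge N$ one has $\vp_i-j^\mu\vp_i\in\mm^{\mu+1}\subseteq\mm\cdot\mm^N\subseteq\mm\mathfrak a$ for each $i$, so Nakayama's lemma gives $(j^\mu\vp_1,\dots,j^\mu\vp_n,\ell_1,\dots,\ell_e)A=\mathfrak a$; in particular $\dim_0 V(X;j^\mu\vp)\le e$. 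Thus already $\mu_0=N$ makes (i)$\Rightarrow$(ii) hold, and (ii)$\Rightarrow$(iii) is trivial.

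The substance of the theorem is the converse, (iii)$\Rightarrow$(i): if $\vp$ is \emph{not} flat, i.e. $\dim_0 V(X;\vp)>\dim_0 X-n$, then $j^\mu\vp$ is not flat for all $\mu$ large. This is where the soft argument breaks down — the integer $N$ witnessing Artinianness of $(j^\mu\vp,\ell)$ could a priori grow with $\mu$ — and where the complete intersection hypothesis must do real work. My plan is to fix a standard basis (with its leading exponents) of the ideal $I=(\vp_1,\dots,\vp_n)A$ in the complete intersection ring $A$ together with a finite generating set of the syzygies exhibiting some $\vp_i$ as a zero divisor modulo the earlier ones, and to argue, using that $\hat A$ is a complete intersection and hence the relevant syzygy modules are finitely generated with bounded $\mm$-adic orders, that such a zero-divisor relation persists after truncating each $\vp_i$ beyond a bound depending only on finitely many coefficients of the $\vp_i$; this keeps $\dim_0 V(X;j^\mu\vp)$ strictly above $\dim_0 X-n$. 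An equivalent and perhaps cleaner route is to pass to the graph $\Gamma_\vp\subseteq X\times\C^n$ and its projection to $\C^n$, and to show that the Hilbert--Samuel function of $\OO_{X,0}/(j^\mu\vp)\OO_{X,0}$ stabilizes for $\mu\gg0$, so in particular its dimension does. I expect the identification of exactly which finite jet of $\vp$ governs this stabilization, together with the verification that a genuine zero-divisor obstruction cannot be destroyed by high-order truncation, to be the principal obstacle; the Cohen--Macaulayness coming from the complete intersection hypothesis is what makes ``flat'' equivalent to the clean dimension equality and what supplies the control on syzygies needed to close the argument.
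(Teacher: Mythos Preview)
Your reduction to the fibre-dimension equality via miracle flatness is exactly the paper's starting point, and your Nakayama argument for (i)$\Rightarrow$(ii) is a clean direct proof of the step the paper handles by quoting \cite[Ch.\,2, Prop.\,5.3]{Tou} (together with the Krull lower bound). The complexification is harmless but unnecessary---the paper's argument works uniformly over $\K=\R$ or $\C$.

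The gap is in (iii)$\Rightarrow$(i), and both of your suggested routes run into the same obstacle. Your ``cleaner route'' asks that the Hilbert--Samuel function of the truncated fibre ideal stabilize for $\mu\gg0$; by Remark~\ref{rem:same-as-HS-function} this is equivalent to stabilization of the initial-exponent diagrams $\NN(J_\mu(0))$, which the paper states only as a \emph{conjecture} (Conjecture~\ref{conj:diagram-fin-det}) for ideals generated by regular sequences, and Example~\ref{ex:no-equality} shows it fails in general. But in the non-flat case the fibre ideal $(h_1,\dots,h_s,\vp_1,\dots,\vp_n)$ is precisely \emph{not} generated by a regular sequence, so you are in the regime where the counterexample lives. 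The syzygy-persistence idea has the same defect: from $a\vp_{i+1}=\sum_{j\le i}b_j\vp_j$ you only get $a\cdot j^\mu\vp_{i+1}-\sum_j b_j\, j^\mu\vp_j\in\mm^{\mu+1-c}$, and promoting this to an honest zero-divisor relation among the $j^\mu\vp_j$ is exactly the unsolved stabilization question.

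The paper bypasses this entirely using Hironaka's combinatorial flatness criterion (Propositions~\ref{prop:Hir1}--\ref{prop:Hir2}). Non-flatness of $\vp$ produces a single witness $F\in J$ with $\inexp(F)\notin\NN(J(0))$; replacing each $y_k-\vp_k$ by $y_k-j^\mu\vp_k$ in a fixed expression for $F$ yields $G\in J_\mu$ with $j^\mu G=j^\mu F$, hence $\inexp(G)=\inexp(F)$. One then needs only that $\inexp(F)\notin\NN(J_\mu(0))$, and for this full stabilization is not required---only the elementary Lemma~\ref{lem:diagram-up-to-l}, which says the diagrams $\NN(J(0))$ and $\NN(J_\mu(0))$ agree on $\{|\beta|\le\mu\}$ once $\mu$ exceeds the maximal vertex length of $\NN(J(0))$. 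That partial agreement is a short jet computation and is the key idea you are missing.
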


The above theorem implies, in particular, \emph{finite determinacy of complete intersections in $\K\{x\}$}: An $s$-tuple $(f_1,\dots,f_s)$ forms a regular sequence in $\K\{x\}$ if and only if this is so for every $(g_1,\dots,g_s)\in\K\{x\}^s$ with $j^{\mu_0}g_i=j^{\mu_0}f_i$, $i=1,\dots,s$ (Corollary~\ref{cor:fin-det-comp-int}).
As a consequence, we obtain that $\vp:X\to\K^n$ is a flat mapping if and only if every $\psi:X\to\K^n$ satisfying $j^{\mu_0}\psi=j^{\mu_0}\vp$ is flat as well (Theorem~\ref{thm:2flat-fin-det-comp-int}).

Independently, one can prove an analogue of Theorem~\ref{thm:flat-fin-det-comp-int} for finite mappings. This is our Theorem~\ref{thm:flat-fin-det-finite}, which requires no assumptions on the source $X$. Interestingly, Theorem~\ref{thm:flat-fin-det-finite} cannot be generalized to finitely generated modules. That is, in general, flatness of a finitely generated $\K\{x\}$-module is not finitely determined (Example~\ref{ex:not-for-modules}).
\medskip

Another problem considered here is that of stability of flatness under deformations. Recall that given a morphish $\vp:X\to Y$ of $\K$-analytic spaces and a pointed space $(T,0)$ (i.e., an analytic space with a distinguished point $0$), a \emph{deformation of $\vp$ over $Y$} is a Cartesian diagram of the form
\[
\begin{CD}
X    @>>>    \XX \\
@V{\vp}VV    @VV{\Phi}V \\
Y     @>>>   Y\times T \\
@VVV         @VV{\pi}V \\
\{0\} @>>>   T,
\end{CD}
\]
such that $\XX$ is flat over $T$ (see, e.g., \cite{GLS}).
For $\theta\in T$, we denote by $\Pt$ the specialization of $\Phi$ over $\theta$, that is, the pullback of $\Phi$ by $Y\times\{\theta\}\hookrightarrow Y\times T$.

We prove that flatness of mappings into locally irreducible targets is stable under deformations:

\begin{theorem}
\label{thm:stab-flat}
Let $\vp:X\to Y$ be a flat morphism of $\K$-analytic spaces, with $Y$ locally irreducible. If $\Phi$ is a deformation of $\vp$ parametrized by a locally irreducible $T$, then $\Pt$ is a flat mapping for every $\theta\in T$ near zero.
\end{theorem}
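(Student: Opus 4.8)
The plan is to lift the question to the total morphism $\Phi\colon\XX\to Y\times T$ and exploit the fibre-wise criterion of flatness. Since $\XX$ is flat over $T$ and $\Phi^0$ equals $\vp$, which is flat over $Y$, the fibre-wise criterion of flatness (see, e.g., \cite{GLS}) shows that $\Phi$ is flat at every point of the fibre $\XX_0$. Applying the same criterion over an arbitrary $\theta\in T$ (again using that $\XX$ is flat over $T$), one obtains that for a point $p\in\XX_\theta$ the map $\Pt$ is flat at $p$ if and only if $\Phi$ is flat at $p$. Thus the whole problem reduces to showing that the non-flat locus of $\Phi$ --- a closed analytic subset $F\subseteq\XX$ with $F\cap\XX_0=\emptyset$ --- is disjoint from $\XX_\theta$ for every $\theta$ near $0$; after passing to sufficiently small representatives, this is the statement that $0$ does not lie in the closure of $\pi(F)$.

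For this last point I would bring in the two irreducibility hypotheses. First, $Y\times T$ is again locally irreducible because $Y$ and $T$ are, so the geometric characterization of flatness over a locally irreducible target that underlies the rest of the paper --- flatness being equivalent to openness together with a dimension bound on the fibres of the iterated fibre products $\XX\times_{Y\times T}\cdots\times_{Y\times T}\XX$, a vertical-components criterion --- is available both for $\Phi$ and for $\Pt$. Second, because $T$ is locally irreducible and $\XX$ is flat over $T$, no irreducible component of $\XX$ is vertical over $T$: every component dominates $T$ and hence already meets the fibre $\XX_0$; the same applies, over the open set where $\Phi$ is flat, to the fibre powers occurring in the criterion, since fibre powers of a flat map are flat. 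Combining this with the upper semicontinuity of fibre dimension, one would propagate the failure of the criterion from $\XX_\theta$, for a hypothetical sequence $\theta\to0$, back onto $\XX_0$, contradicting $F\cap\XX_0=\emptyset$.

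The step I expect to be the genuine obstacle is precisely this propagation: $\pi\colon\XX\to T$ is not proper, so closedness of $F$ by itself does not keep $\pi(F)$ away from $0$, and the control has to be squeezed out of the semicontinuity built into the flatness criterion together with the $T$-flatness of the total space --- in effect one must show that this criterion is stable under the flat specialization $\XX_\theta\rightsquigarrow\XX_0$. An alternative route to the same uniformity over the (non-compact) fibre $\XX_0$ would be to establish a uniform jet-determinacy bound for flatness of $\Phi$ along $\XX_0$, in the spirit of Theorem~\ref{thm:flat-fin-det-comp-int}, and then reduce flatness of $\Pt$ at each of its points to a jetwise comparison with $\vp$; the bookkeeping of the moving base points of the germs $\Pt$ is then just a matter of fixing sufficiently small representatives from the start.
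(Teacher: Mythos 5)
Your first paragraph already contains a complete proof, by a route genuinely different from (and much shorter than) the paper's; the difficulty you flag afterwards is not actually part of the statement being proved. The fibrewise flatness criterion (crit\`ere de platitude par fibres, EGA~IV~11.3.10, or Matsumura, \emph{Commutative Ring Theory}, Thm.~22.5 --- the Noetherian local-ring version applies verbatim to $\OO_{T,0}\to\OO_{Y\times T,(0,0)}\to\OO_{\XX,0}$) gives that $\Phi$ is flat at $0\in\XX$, because $\OO_{\XX,0}$ is $\OO_{T,0}$-flat by the definition of a deformation and $\OO_{\XX,0}\otimes_{\OO_{T,0}}\K=\OO_{X,0}$ is flat over $\OO_{Y\times T,(0,0)}\otimes_{\OO_{T,0}}\K=\OO_{Y,0}$ by the flatness of $\vp$. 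That is all that is needed: the paper's own opening reduction (openness of the flat locus, \cite[Thm.\,7.15]{BM}, plus stability of flatness under base change) is precisely that it suffices to prove flatness of $\Phi$ at the single point $0\in\XX$. The conclusion is a statement about germs at $0\in\XX$, so one may shrink the representative of $\XX$ to a neighbourhood of $0$ avoiding the closed non-flat locus $F$; your worry about $\pi(F)$ accumulating at $0\in T$ concerns points of the full fibres $\XX_\theta$ far from $0\in\XX$, which the local statement does not see. Hence the ``propagation'' step you single out as the genuine obstacle is vacuous, and your second and third paragraphs --- which in any case only gesture at a vertical-component criterion for iterated fibre powers without carrying it out --- should simply be deleted rather than completed.

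For comparison, the paper argues quite differently: for $T=\K$ it uses Hironaka's diagram (a nonzero $F\in I$ supported off $\NN(I(0,0,x))$ must vanish at $t=0$ by flatness of $\vp$, hence be divisible by $t$; factoring out the maximal power of $t$ contradicts $t$ being a nonzerodivisor in the $T$-flat ring $\OO_{\XX,0}$), then inducts on $\dim T$ for smooth $T$, and for general $T$ desingularizes and descends flatness along a dominant map over the locally irreducible $Y\times T$ using \cite{AS1}. Your route buys brevity and, notably, dispenses with the local irreducibility of $Y$ and $T$ altogether --- those hypotheses enter only in the paper's descent step --- at the cost of importing the fibrewise criterion instead of the elementary diagram computation. (One citation quibble: I would point to EGA or Matsumura for the fibrewise criterion rather than \cite{GLS}.)
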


When $\K=\C$, we also have an analogous result about stability of openness (Proposition~\ref{prop:stab-defect} and Corollary~\ref{cor:stab-open}).
\medskip

%%%%%%%%%%%%%%%%%%%%%%%%%%%%%%%%%%%%%%%%%%%%%%%%%%
\subsection{Plan of the paper}

Our main tool here is Hironaka's diagram of initial exponents. We recall this notion and its relevance to flatness in the next section. Section~\ref{sec:approx} is devoted to approximation of the diagram of a given ideal in $\K\{x\}$ by the diagrams of its Taylor approximations. Theorems~\ref{thm:flat-fin-det-comp-int} and~\ref{thm:stab-flat} are proved in Sections~\ref{sec:determinacy} and~\ref{sec:stability} respectively. The last section is devoted to the complex case. In the complex setting, we prove geometric analogues of our main results.
\bigskip

%%%%%%%%%%%%%%%%%%%%%%%%%%%%%%%%%%%%%%%%%%%%%%%%%%
%%%%%%%%%%%%%%%%%%%%%%%%%%%%%%%%%%%%%%%%%%%%%%%%%%
%%%%%%%%%%%%%%%%%%%%%%%%%%%%%%%%%%%%%%%%%%%%%%%%%%
\section{Diagram of initial exponents and flatness}
\label{sec:toolbox}

Let $\K=\R$ or $\C$. Let $A=\K\{y\}/J$ be a local analytic $\K$-algebra, where $y=(y_1,\dots,y_n)$ and $J\subset\K\{y\}$ is a proper ideal.
Let $x=(x_1,\dots,x_m)$ and define $A\{x\}\coloneqq\K\{y,x\}/J\!\cdot\!\K\{y,x\}$.
We will write $x^\beta$ for $x_1^{\beta_1}\dots x_m^{\beta_m}$, where $\beta=(\beta_1,\dots,\beta_m)\in\N^m$.
By the canonical embedding $A\{x\}\hookrightarrow A[[x]]$, one can regard the elements of $A\{x\}$ as power series with coefficients in $A$.

Let $\mm_A$ denote the maximal ideal of $A$.
For a power series $F=\sum_{\beta\in\N^m}f_\beta x^\beta\in A\{x\}$, define its \emph{evaluation at $0$} as
$F(0)=\sum_{\beta\in\N^m}f_\beta(0)x^\beta\in A/\mm_A\{x\}=\K\{x\}$, and for an ideal $I$ in $A\{x\}$
define $I(0)=\{F(0): F\in I\}$, the \emph{evaluated} ideal.

For $\beta=(\beta_1,\dots,\beta_m)\in\N^m$, we write $|\beta|=\beta_1+\dots+\beta_m$ and call it the \emph{length} of $\beta$.
We define a total ordering of $\N^m$ by lexicographic ordering of the $(m+1)$-tuples $(|\beta|,\beta_1,\dots,\beta_m)$, where $\beta=(\beta_1,\dots,\beta_m)$.
The \emph{support} of $F=\sum_{\beta\in\N^m}f_\beta x^\beta$ is defined as $\supp(F)=\{\beta\in\N^m:f_\beta\neq0\}$,
and $\inexp(F)=\min\{\beta\in\supp(F)\}$ denotes the \emph{initial exponent} of $F$.
Similarly, $\supp(F(0))=\{\beta\in\N^m:f_\beta(0)\neq0\}$ and $\inexp(F(0))=\min\{\beta\in\supp(F(0))\}$, for the evaluated
series. Of course, $\supp(F(0))\subset\supp(F)$.

If $\beta^*=\inexp(F)$ is the initial exponent of $F=\sum f_\beta x^\beta$, then the `monomial' $f_{\beta^*}x^{\beta^*}\in A\{x\}$ is called the \emph{initial term} of $F$ and denoted $\inform(F)$.

Given an ideal $I$ in $A\{x\}$, we denote by $\NN(I)$ the \emph{diagram of initial exponents} of $I$, that is,
\[
\NN(I)=\{\inexp(F):F\in I\setminus\{0\}\}\,.
\]
Similarly, for the evaluated ideal $I(0)$, we set
\[
\NN(I(0))=\{\inexp(F(0)):F\in I, F(0)\neq0\}\,.
\]
Note that every diagram $\NN(I)$ satisfies the equality $\NN(I)+\N^m=\NN(I)$. (Indeed, for $\beta\in\NN(I)$ and $\gamma\in\N^m$, one can choose $F\in I$ such that $\inexp(F)=\beta$; then $x^\gamma F\in I$ and hence $\beta+\gamma=\inexp(x^\gamma F)$ is in $\NN(I)$.)

\begin{remark}
\label{rem:vertices}
It is not difficult to show that, for every ideal $I$, there exists a unique smallest (finite) set $V(I)\subset\NN(I)$ such that $V(I)+\N^m=\NN(I)$ (see, e.g., \cite[Lem.\,3.8]{BM}). The elements of $V(I)$ are called the \emph{vertices} of the diagram $\NN(I)$.
\end{remark}

We now recall Hironaka's combinatorial criterion that expresses flatness in terms of the diagram of initial exponents:
For an ideal $I$ in $A\{x\}$, set $\Delta=\N^m\setminus\NN(I(0))$, and define $A\{x\}^\Delta=\{F\in A\{x\}:\supp(F)\subset\Delta\}$.
Consider the canonical projection $A\{x\}\to A\{x\}/I$ and its restriction to $A\{x\}^\Delta$, called $\kappa$. The following two results will be used throughout the paper.

\begin{proposition}[{\cite[\S\,6, Prop.\,9]{H}}]
\label{prop:Hir1}
The mapping $\kappa:A\{x\}^\Delta\to A\{x\}/I$ is surjective.
\end{proposition}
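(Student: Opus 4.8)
The plan is to prove the equivalent statement that $A\{x\}=A\{x\}^\Delta+I$ by carrying out a Hironaka-style division of an arbitrary $F\in A\{x\}$ modulo a ``standard basis'' of $I$ adapted to the diagram $\NN(I(0))$ rather than to $\NN(I)$. First I would fix the vertices $V(I(0))=\{\beta^{(1)},\dots,\beta^{(r)}\}$ of $\NN(I(0))$ (Remark~\ref{rem:vertices} applied in $\K\{x\}$). By definition of $\NN(I(0))$, for each $i$ there is $G_i\in I$ with $\inexp(G_i(0))=\beta^{(i)}$; then $\bigl([x^{\beta^{(i)}}]G_i\bigr)(0)\neq 0$, so $[x^{\beta^{(i)}}]G_i$ is a unit of the local ring $A$, and after rescaling we may take $[x^{\beta^{(i)}}]G_i=1$. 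Writing $G_i=x^{\beta^{(i)}}+G_i^{<}+G_i^{>}$, where $G_i^{<}$ (resp.\ $G_i^{>}$) collects the monomials of $G_i$ at exponents strictly smaller (resp.\ strictly larger) than $\beta^{(i)}$ in the total order, the condition $\inexp(G_i(0))=\beta^{(i)}$ means precisely that every coefficient of $G_i^{<}$ lies in $\mm_A$. Finally, partition $\NN(I(0))=\bigsqcup_{i=1}^r\Delta_i$ in the usual way, with $\Delta_i\subset\beta^{(i)}+\N^m$, so that each $\beta\in\NN(I(0))$ is uniquely $\beta^{(i)}+\gamma_i(\beta)$ with $\beta\in\Delta_i$.

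Next I would run the division algorithm. Set $F_0=F$ and recursively $F_{k+1}=F_k-\sum_{i=1}^r Q_i^{(k)}G_i$, where $Q_i^{(k)}=\sum_{\beta}[x^\beta]F_k\cdot x^{\gamma_i(\beta)}$, the sum over $\beta\in\supp(F_k)\cap\Delta_i$. Since $\sum_i Q_i^{(k)}x^{\beta^{(i)}}$ is exactly the part of $F_k$ supported on $\NN(I(0))$, one step leaves $F_{k+1}=\bigl(\text{part of }F_k\text{ supported on }\Delta\bigr)-\sum_i Q_i^{(k)}(G_i^{<}+G_i^{>})$; that is, all $\NN(I(0))$-monomials are killed, at the cost of reintroducing the tail terms. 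By translation invariance of the order, the monomials reintroduced from $G_i^{>}$ sit at exponents strictly larger than those just removed, while those from $G_i^{<}$ may sit at smaller exponents but pick up a coefficient factor in $\mm_A$. A bookkeeping argument on the filtration of $A[[x]]$ by ``order of the smallest monomial'' combined with the $\mm_A$-adic filtration then shows the iteration converges $x$-adically at each fixed $\mm_A$-adic level, yielding (at least formally in $A[[x]]$) series $Q_i$ and a series $R$ with $\supp(R)\subset\Delta$ and $F=\sum_i Q_iG_i+R$.

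The remaining, and genuinely hard, step is convergence: one must show that $R$ and the $Q_i$ actually lie in $A\{x\}^\Delta$ and $A\{x\}$. I would handle this exactly as in the proof of the Weierstrass--Grauert division theorem, adapted to this relative setting (division with respect to the initial exponents of the $G_i(0)$ rather than of the $G_i$): fix a polydisc on which all $G_i$ converge, and prove by induction on $k$ that the coefficients introduced at step $k$ are dominated by a fixed geometric majorant series, the contraction coming, after shrinking the polydisc, from the fact that the coefficients of each $G_i^{<}$ vanish at $0$ together with the order increase produced by $G_i^{>}$. Granting this, $F-R=\sum_i Q_iG_i\in I$, so the class of $F$ in $A\{x\}/I$ lies in the image of $\kappa$, which proves surjectivity. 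I expect this majorant estimate to be the main obstacle; the combinatorics of the first two steps are routine, and in fact if one only wanted the analogue with $A\{x\}$ replaced by the formal power series ring $A[[x]]$, the first two steps would already suffice.
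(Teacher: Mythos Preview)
The paper does not prove this proposition; it merely cites it from Hironaka \cite[\S\,6, Prop.\,9]{H} (see also \cite[\S\,6--7]{BM}). So there is no ``paper's own proof'' to compare against.

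That said, your outline is exactly the standard argument for Hironaka's relative division theorem, and the key points are all correctly identified: one chooses representatives $G_i\in I$ of the vertices of $\NN(I(0))$ (not of $\NN(I)$), normalizes so that $[x^{\beta^{(i)}}]G_i=1$ using that this coefficient is a unit of the local ring $A$, observes that the ``low'' part $G_i^{<}$ has all coefficients in $\mm_A$, and then runs the division iteration with respect to a partition of $\NN(I(0))$. Your description of the formal convergence via the bi-filtration (monomial order vs.\ $\mm_A$-adic order) is accurate, and you are right that the substantial analytic content lies in the majorant estimate for convergence in $A\{x\}$. In short, your plan reproduces Hironaka's proof; there is nothing to correct beyond carrying out the majorant bound, which you have correctly flagged as the nontrivial step. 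One minor comment: rather than phrasing the iteration so that the remainder ``accumulates'' inside $F_k$, it is slightly cleaner to separate at each stage $F_k=R_k+H_k$ with $\supp(R_k)\subset\Delta$ and $\supp(H_k)\subset\NN(I(0))$, and to show that $R=\sum_k (R_{k+1}-R_k)$ and $Q_i=\sum_k Q_i^{(k)}$ converge; this makes the majorant bookkeeping more transparent, but is equivalent to what you wrote.
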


\begin{proposition}[{\cite[\S\,6, Prop.\,10]{H}}]
\label{prop:Hir2}
The ring $A\{x\}/I$ is flat as an $A$-module if and only if $\kappa$ is bijective.
\end{proposition}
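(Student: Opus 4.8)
The plan is to reduce the statement to an injectivity assertion and then run a short homological argument. Since Proposition~\ref{prop:Hir1} already gives surjectivity of $\kappa$, bijectivity is equivalent to injectivity, i.e. to the vanishing of
\[
K \coloneqq \ker\kappa = A\{x\}^\Delta\cap I.
\]
Thus the whole proposition becomes: $A\{x\}/I$ is $A$-flat if and only if $A\{x\}^\Delta\cap I=0$. The exact sequence $0\to K\to A\{x\}^\Delta\xrightarrow{\kappa}A\{x\}/I\to0$ will be the backbone of both implications.

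First I would isolate two facts about the ``$\Delta$-supported'' module. \emph{(Lemma A)} $A\{x\}^\Delta$ is a flat $A$-module, and reduction of coefficients modulo $\mm_A$ identifies $A\{x\}^\Delta\otimes_A\K$ with $\K\{x\}^\Delta$, carrying $\kappa\otimes_A\K$ to the projection $\bar\kappa\colon\K\{x\}^\Delta\to\K\{x\}/I(0)$; here I use $A\{x\}/\mm_A A\{x\}=\K\{x\}$ and that $I$ reduces precisely to $I(0)$, so that $(A\{x\}/I)\otimes_A\K=\K\{x\}/I(0)$. \emph{(Lemma B)} The monomials $\{x^\delta:\delta\in\Delta\}$ form a $\K$-basis of $\K\{x\}/I(0)$; equivalently $\bar\kappa$ is an isomorphism. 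Lemma B is the classical division theorem over the field $\K$ (existence and uniqueness of the remainder supported off the diagram $\NN(I(0))$): existence is the $A=\K$ instance of Proposition~\ref{prop:Hir1}, and uniqueness gives $\K\{x\}^\Delta\cap I(0)=0$.

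With these in hand the two implications are immediate. If $\kappa$ is bijective then $A\{x\}/I\cong A\{x\}^\Delta$ is flat by Lemma A, giving one direction. Conversely, assume $A\{x\}/I$ is $A$-flat. Then $\mathrm{Tor}_1^A(A\{x\}/I,\K)=0$, so tensoring the backbone sequence with $\K$ keeps it exact:
\[
0\to K\otimes_A\K\to A\{x\}^\Delta\otimes_A\K\xrightarrow{\,\bar\kappa\,}(A\{x\}/I)\otimes_A\K\to0.
\]
By Lemmas A and B the middle arrow $\bar\kappa$ is an isomorphism, whence $K\otimes_A\K=K/\mm_A K=0$, i.e. $K=\mm_A K$. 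Iterating gives $K\subseteq\bigcap_n\mm_A^n A\{x\}$; since every element of $\mm_A^n A\{x\}$ has all of its $x$-coefficients in $\mm_A^n$ and $\bigcap_n\mm_A^n=0$ in the Noetherian local ring $A$ (Krull), this intersection is $0$. Therefore $K=0$, so $\kappa$ is injective, hence bijective.

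The genuine obstacle is Lemma A, specifically the flatness of $A\{x\}^\Delta$ over $A$ together with the compatibility of the base change $-\otimes_A\K$. Heuristically $A\{x\}^\Delta$ should behave like a module ``free on the monomials $x^\delta$'', and indeed its formal analogue $\prod_{\delta\in\Delta}A$ is flat over the Noetherian (hence coherent) ring $A$. The difficulty is purely analytic: one must verify the equational criterion of flatness \emph{inside the convergent category}. Given a relation $\sum_i a_i F_i=0$ with $a_i\in A$ and $F_i\in A\{x\}^\Delta$, comparing the coefficients of each $x^\delta$ yields syzygies of $(a_1,\dots,a_r)$ in $A$, and by Noetherianity these factor through a fixed finite generating set of the syzygy module; but to reassemble the resulting coefficients into honest elements of $A\{x\}^\Delta$ one needs the factorizations to be chosen with \emph{uniform} convergence bounds. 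This is exactly the content of Hironaka's division with estimates (an open-mapping/Weierstrass bound for the finitely generated ideal $\mm_A$ and for the standard-basis division producing $\Delta$-supported remainders), and it is where all the convergence bookkeeping is concentrated; once it is available, the same estimates also furnish the identification $A\{x\}^\Delta\otimes_A\K\cong\K\{x\}^\Delta$ used above.
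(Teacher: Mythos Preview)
The paper does not prove this proposition; it merely quotes it from Hironaka \cite[\S\,6, Prop.\,10]{H}. So there is no ``paper's own proof'' to compare against. Your argument is therefore being judged on its own merits, and its overall architecture is sound: reduce to injectivity, use the short exact sequence $0\to K\to A\{x\}^\Delta\to A\{x\}/I\to 0$, tensor with $\K$, and kill $K$ via $K=\mm_A K$ together with Krull's intersection theorem applied coefficientwise. Both directions go through as you describe.

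Where you go astray is in assessing the difficulty of Lemma~A. You present the flatness of $A\{x\}^\Delta$ as the ``genuine obstacle'' requiring Hironaka's division with estimates, but in fact it is almost free. The point is that $A\{x\}^\Delta$ is a direct $A$-module summand of $A\{x\}$: given $F\in A\{x\}$, lift to $\tilde F\in\K\{y,x\}$, truncate the $x$-support to $\Delta$ to obtain $\tilde F_\Delta\in\K\{y,x\}$ (the coefficient bounds of $\tilde F$ on a polydisc persist for $\tilde F_\Delta$), and observe that the class of $\tilde F_\Delta$ in $A\{x\}$ depends only on $F$ (since each $x$-coefficient of a difference of lifts lies in $J$). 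This gives $A\{x\}=A\{x\}^\Delta\oplus A\{x\}^{\N^m\setminus\Delta}$. Now $A\{x\}=\K\{y,x\}/J\!\cdot\!\K\{y,x\}=\K\{y,x\}\otimes_{\K\{y\}}A$ is $A$-flat because $\K\{y,x\}$ is $\K\{y\}$-flat, and a direct summand of a flat module is flat. The same splitting immediately yields $A\{x\}^\Delta\otimes_A\K\cong\K\{x\}^\Delta$ and the identification of $\kappa\otimes_A\K$ with $\bar\kappa$. No equational criterion, no uniform syzygy bounds, and no division with estimates are needed here; the only analytic input is the (trivial) fact that restricting the $x$-support of a convergent series in $\K\{y,x\}$ preserves convergence. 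With this simplification your proof is complete and clean.
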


%%%%%%%%%%%%%%%%%%%%%%%%%%%%%%%%%%%%%%%%%%%%%%%%%%
%%%%%%%%%%%%%%%%%%%%%%%%%%%%%%%%%%%%%%%%%%%%%%%%%%
%%%%%%%%%%%%%%%%%%%%%%%%%%%%%%%%%%%%%%%%%%%%%%%%%%
\section{Approximation of diagrams}
\label{sec:approx}

Let $\K=\R$ or $\C$. Let $x=(x_1,\dots,x_m)$ and let $\mm_x$ denote the maximal ideal of $\K\{x\}$. Recall that, for a natural number $k\in\N$ and a power series $f\in\K\{x\}$, the \emph{$k$-jet} of $f$ (denoted $j^k f$) is the image of $f$ under the canonical epimorphism $\K\{x\}\to\K\{x\}/\mm_x^{k+1}$.

In the present section we study the relations between the diagram of initial exponents of a given ideal in $\K\{x\}$ and those of its Taylor approximations. Throughout this section, we will use the following notation: Let $f_1,\dots,f_s$ be a finite collection of power series in $\K\{x\}$ and let
\[
I=(f_1,\dots,f_s)\cdot\K\{x\}\,.
\]
For a natural number $\mu$, let $I_\mu$ denote the ideal generated by the $\mu$-jets $j^\mu{f_i}$, $i=1,\dots,s$, that is,
\[
I_\mu=(j^\mu{f_1},\dots,j^\mu{f_s})\cdot\K\{x\}\,.
\]

The following simple observation will be used often in our considerations.

\begin{remark}
\label{rem:jet-exp}
Given a power series $F\in\K\{x\}$, suppose that $\mu\geq|\inexp(F)|$.
Then 
\[
\inexp(F)=\inexp(G)
\]
for every $G\in\K\{x\}$ with $j^\mu G=j^\mu F$.
\end{remark}

We now show the connection between the diagram of initial exponents of $I$ and those of its approximations $I_\mu$.

\begin{lemma}
\label{lem:diagram-up-to-l}
Let $I$ and $\{I_\mu\}_{\mu\in\N}$ be as above. Let $l_0$ be the maximum of lengths of vertices of the diagram $\NN(I)$. Then:
\begin{itemize}
\item[(i)] $\NN(I_\mu)\supset\NN(I)$ for all $\mu\geq l_0$.
\item[(ii)] For every $l\geq l_0$, 
\[
\NN(I_\mu)\cap\{\beta\in\N^m:|\beta|\leq l\}=\NN(I)\cap\{\beta\in\N^m:|\beta|\leq l\}\,,
\]
for all $\mu\geq l$.
\end{itemize}
\end{lemma}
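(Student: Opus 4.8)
The plan is to analyze, for each degree level $l \geq l_0$, how the diagram of a finitely generated ideal changes when we replace its generators by their $\mu$-jets. The key elementary tool is Remark~\ref{rem:jet-exp}: an initial exponent of length $\leq \mu$ is unaffected by passing to $\mu$-jets. The guiding principle is that any element $F$ of $I$ whose initial exponent has length $\leq l$ can be written as $F = \sum_i g_i f_i$, and by truncating the $g_i$ appropriately we can produce an element of $I_\mu$ (for $\mu$ large) with the same initial term up to order $l$, and conversely. Throughout I would work with a fixed system of generators for $I$ together with the (finitely many) generators that realize the vertices of $\NN(I)$.

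First I would prove (i). For each vertex $\beta$ of $\NN(I)$ choose $F_\beta \in I$ with $\inexp(F_\beta) = \beta$; write $F_\beta = \sum_i g_{\beta,i} f_i$ with $g_{\beta,i} \in \K\{x\}$. Since $|\beta| \leq l_0 \leq \mu$, one checks that the element $G_\beta = \sum_i (j^\mu g_{\beta,i})(j^\mu f_i) \in I_\mu$ satisfies $j^\mu G_\beta = j^\mu F_\beta$ modulo higher-order corrections — more precisely, $G_\beta$ and $F_\beta$ agree in all degrees $\leq \mu$, hence by Remark~\ref{rem:jet-exp} (applicable because $|\inexp(F_\beta)| = |\beta| \leq \mu$) we get $\inexp(G_\beta) = \beta$, so $\beta \in \NN(I_\mu)$. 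Then $\NN(I_\mu) \supset V(I) + \N^m = \NN(I)$, using the stability property $\NN(I_\mu) + \N^m = \NN(I_\mu)$. The one point requiring care is the claim that $\sum_i (j^\mu g_{\beta,i})(j^\mu f_i)$ agrees with $\sum_i g_{\beta,i} f_i$ up to degree $\mu$; this is just the observation that replacing a factor in a product by its $\mu$-jet does not change the product in degrees $\leq \mu$, applied to each summand.

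Next, (ii). The inclusion $\supset$ on the truncated sets follows immediately from (i). For $\subset$, suppose $\beta \in \NN(I_\mu)$ with $|\beta| \leq l \leq \mu$; I must show $\beta \in \NN(I)$. Pick $H \in I_\mu$ with $\inexp(H) = \beta$, say $H = \sum_i h_i (j^\mu f_i)$ with $h_i \in \K\{x\}$. Consider $\tilde H = \sum_i h_i f_i \in I$. Since $f_i$ and $j^\mu f_i$ differ only in degrees $> \mu \geq l \geq |\beta|$, the series $H$ and $\tilde H$ agree in all degrees $\leq \mu$; in particular they share the same terms in degrees $\leq |\beta|$, so $\tilde H$ has a nonzero term at $\beta$ and no nonzero terms at smaller exponents. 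Hence $\inexp(\tilde H) = \beta$ and $\beta \in \NN(I)$. Intersecting both directions with $\{|\beta| \leq l\}$ gives the equality.

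The main obstacle — really the only subtle point — is bookkeeping the orders carefully: one must make sure that in each reduction step the threshold $\mu$ dominates the length of the relevant initial exponent ($\leq l_0$ in part (i), $\leq l$ in part (ii)) so that Remark~\ref{rem:jet-exp} applies, and that truncating generators or multipliers only perturbs things in degrees strictly above that threshold. Everything else is a direct manipulation of supports and the semigroup property $\NN(\,\cdot\,) + \N^m = \NN(\,\cdot\,)$. No deep input (e.g.\ no flatness, no Hironaka criterion) is needed for this lemma; it is purely combinatorial.
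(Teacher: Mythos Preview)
Your proof is correct and follows essentially the same approach as the paper's: in both parts you and the authors pass between elements of $I$ and $I_\mu$ by replacing $f_i$ with $j^\mu f_i$ (or vice versa) in an explicit linear combination, then invoke Remark~\ref{rem:jet-exp} to conclude that the initial exponent is unchanged because its length is at most $\mu$. The only cosmetic differences are that the paper does not bother to truncate the coefficients $g_{\beta,i}$ in part~(i) (it uses $\sum_i g_{ki}\cdot j^\mu f_i$ directly) and phrases part~(ii) as a contrapositive, but the content is identical.
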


\begin{proof}
Fix $\mu\geq l_0$. By Remark~\ref{rem:vertices}, for the proof of (i) it suffices to show that the vertices of $\NN(I)$ are contained in $\NN(I_\mu)$.
Let $F_1,\dots,F_q\in I$ be any set of representatives of the vertices of $\NN(I)$. We can write $F_k=\sum_{i=1}^sg_{ki}f_i$, $k=1,\dots,q$, for some $g_{ki}\in\K\{x\}$. Then
\[
j^\mu{F_k}=j^\mu(\sum_{i=1}^sg_{ki}f_i)=j^\mu(\sum_{i=1}^sg_{ki}\!\cdot\!j^\mu{f_i})\,,
\]
since the power series of a product up to order $\mu$ depends only on the power series up to order $\mu$ of its factors.
Hence, by Remark~\ref{rem:jet-exp}, we have equality of the initial exponents $\inexp(F_k)=\inexp(\sum_{i=1}^sg_{ki}\!\cdot\!j^\mu{f_i})$. It follows that $\inexp(F_k)\in\NN(I_\mu)$ for all $k$, which proves (i).

For the proof of part (ii), fix $l\geq l_0$. It now suffices to show that
\[
\NN(I_\mu)\cap\{\beta\in\N^m:|\beta|\leq l\}\ \subset\ \NN(I)\cap\{\beta\in\N^m:|\beta|\leq l\}\,,
\]
for every $\mu\geq l$. Pick $\beta^*\in\N^m\setminus\NN(I)$ with $|\beta^*|\leq l$. Suppose that $\beta^*\in\NN(I_\mu)$ for some $\mu\geq l$. Then one can choose $G\in I_\mu$ with $\inexp(G)=\beta^*$. Write $G=\sum_{i=1}^sg_i\cdot j^\mu{f_i}$ for some $g_i\in\K\{x\}$.
We have $j^\mu G=j^\mu(\sum_{i=1}^sg_i\cdot j^\mu{f_i})=j^\mu(\sum_{i=1}^sg_if_i)$, and since $|\inexp(G)|=|\beta^*|\leq\mu$, it follows that $\inexp(G)=\inexp(\sum_{i=1}^sg_if_i)$, by Remark~\ref{rem:jet-exp} again. Therefore $\beta^*\in\NN(I)$; a contradiction.
\end{proof}

\begin{corollary}
\label{cor:1diagram}
Let $I$ and $\{I_\mu\}_{\mu\in\N}$ be as above. Then
\[
\NN(I)=\bigcap_{\mu\geq l_0}\NN(I_\mu)\,,
\]
where $l_0$ is the maximum of lengths of vertices of the diagram $\NN(I)$.
\end{corollary}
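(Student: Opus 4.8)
The statement is a direct consequence of Lemma~\ref{lem:diagram-up-to-l}, so the plan is simply to assemble the two inclusions from its parts (i) and (ii). For the inclusion $\NN(I)\subset\bigcap_{\mu\geq l_0}\NN(I_\mu)$, I would invoke part (i) verbatim: it asserts $\NN(I)\subset\NN(I_\mu)$ for every $\mu\geq l_0$, and intersecting over all such $\mu$ preserves this containment.

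For the reverse inclusion $\bigcap_{\mu\geq l_0}\NN(I_\mu)\subset\NN(I)$, I would argue by contraposition. Given $\beta^*\in\N^m\setminus\NN(I)$, the goal is to exhibit a single $\mu\geq l_0$ with $\beta^*\notin\NN(I_\mu)$, which then shows $\beta^*\notin\bigcap_{\mu\geq l_0}\NN(I_\mu)$. Set $l=\max\{l_0,|\beta^*|\}$, so that $l\geq l_0$ and $|\beta^*|\leq l$. Applying part (ii) of the lemma with this $l$ and with $\mu=l$ gives
\[
\NN(I_l)\cap\{\beta\in\N^m:|\beta|\leq l\}=\NN(I)\cap\{\beta\in\N^m:|\beta|\leq l\}\,.
\]
Since $\beta^*\notin\NN(I)$ and $|\beta^*|\leq l$, the right-hand side does not contain $\beta^*$, hence neither does $\NN(I_l)$. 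As $l\geq l_0$, the ideal $I_l$ occurs in the intersection, and we conclude $\beta^*\notin\bigcap_{\mu\geq l_0}\NN(I_\mu)$.

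There is essentially no obstacle here; the only point requiring a moment's attention is that part (ii) must be applied at a truncation level $l$ large enough to ``see'' the given exponent $\beta^*$, which is exactly why one takes $l=\max\{l_0,|\beta^*|\}$ rather than $l_0$ itself. Combining the two inclusions yields the claimed equality.
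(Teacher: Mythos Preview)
Your proposal is correct and follows essentially the same approach as the paper: both directions come straight from Lemma~\ref{lem:diagram-up-to-l}, with part (i) giving the forward inclusion and part (ii) applied at level $\max\{l_0,|\beta^*|\}$ handling the reverse. The only cosmetic difference is that the paper notes $\beta^*\notin\NN(I_\mu)$ for all $\mu\geq\max\{|\beta^*|,l_0\}$, whereas you exhibit one such $\mu$, which is all that is needed.
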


\begin{proof}
By Lemma~\ref{lem:diagram-up-to-l}(i), we have $\NN(I)\subset\bigcap_{\mu\geq l_0}\NN(I_\mu)$.
On the other hand, by part (ii) of the lemma, if $\beta^*\in\N^m\setminus\NN(I)$, then $\beta^*\notin\NN(I_\mu)$ for all $\mu\geq\max\{|\beta^*|,l_0\}$.
\end{proof}

\begin{corollary}
\label{cor:2diagram}
Let $I$ and $\{I_\mu\}_{\mu\in\N}$ be as above. Let $l_0$ be the maximum of lengths of vertices of the diagram $\NN(I)$. If $I$ contains a $k$'th power of the maximal ideal $\mm_x$, then $k\geq l_0$ and $\NN(I_\mu)=\NN(I)$ for all $\mu\geq k$.
\end{corollary}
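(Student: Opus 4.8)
The plan is to deduce both conclusions from the results already established in this section. First I would observe that the hypothesis $\mm_x^k\subset I$ forces $\NN(I)$ to contain every $\beta\in\N^m$ with $|\beta|\geq k$: indeed, for such $\beta$ the monomial $x^\beta$ lies in $I$, so $\beta=\inexp(x^\beta)\in\NN(I)$. Consequently every vertex of $\NN(I)$ must have length at most $k$ — if some vertex $v\in V(I)$ had $|v|>k$, then $v$ would already lie in $\{\beta:|\beta|\geq k\}\subset\NN(I)$ and would be a sum of a length-$k$ element of $\NN(I)$ and a nonzero element of $\N^m$ (pick any coordinate of $v$ and split off one unit, noting the remaining part still has length $\geq k$), contradicting minimality of $V(I)$. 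Hence $l_0=\max\{|v|:v\in V(I)\}\leq k$, which is the first assertion.

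For the second assertion, fix $\mu\geq k$. By Lemma~\ref{lem:diagram-up-to-l}(i), since $\mu\geq k\geq l_0$, we have $\NN(I_\mu)\supset\NN(I)$, so only the reverse inclusion needs proof. Take $\beta^*\in\N^m\setminus\NN(I)$; I must show $\beta^*\notin\NN(I_\mu)$. Because $\NN(I)$ contains all exponents of length $\geq k$, the complement $\N^m\setminus\NN(I)$ sits entirely in $\{\beta:|\beta|<k\}$, so $|\beta^*|<k\leq\mu$. Now apply Lemma~\ref{lem:diagram-up-to-l}(ii) with $l=k$ (legitimate since $k\geq l_0$): we get
\[
\NN(I_\mu)\cap\{\beta\in\N^m:|\beta|\leq k\}=\NN(I)\cap\{\beta\in\N^m:|\beta|\leq k\}
\]
for all $\mu\geq k$. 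Since $|\beta^*|<k$ and $\beta^*\notin\NN(I)$, the right-hand side does not contain $\beta^*$, hence neither does the left-hand side, so $\beta^*\notin\NN(I_\mu)$. Combined with the inclusion $\NN(I)\subset\NN(I_\mu)$, this gives $\NN(I_\mu)=\NN(I)$.

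I do not anticipate a serious obstacle here; the corollary is essentially a packaging of Lemma~\ref{lem:diagram-up-to-l}. The one point requiring a little care is the purely combinatorial claim that $\mm_x^k\subset I$ implies every vertex has length $\leq k$ — one should make sure the splitting argument is phrased correctly (a vertex of length exactly $k$ is fine; only vertices of length strictly greater than $k$ are excluded, since they would be redundant given that all of $\{|\beta|\geq k\}$ already lies in the diagram). Once that is in hand, the rest is a direct citation of Lemma~\ref{lem:diagram-up-to-l}(i) and (ii) with the specific choice $l=k$.
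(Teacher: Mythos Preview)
Your proposal is correct and follows essentially the same approach as the paper's proof: both derive $l_0\leq k$ from the inclusion $\NN(\mm_x^k)=\{|\beta|\geq k\}\subset\NN(I)$, and both obtain $\NN(I_\mu)=\NN(I)$ by combining Lemma~\ref{lem:diagram-up-to-l}(i) with part (ii) at $l=k$ (or $l=\mu$). The paper's write-up is simply terser---it asserts $l_0\leq k$ without spelling out the vertex argument and cites only part (ii) of the lemma, leaving the inclusion from part (i) implicit---whereas you make both steps explicit.
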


\begin{proof}
If $I\supset\mm_x^k$ for some $k$, then $\NN(I)\supset\NN(\mm_x^k)$. Hence $l_0\leq k$ and
\[
(\N^m\setminus\NN(I))\cap\{\beta\in\N^m:|\beta|\leq l\}\ =\ \N^m\setminus\NN(I)\,,
\]
for all $l\geq k$. The statement thus follows from Lemma~\ref{lem:diagram-up-to-l}(ii).
\end{proof}

It is important to observe that, in general, there need not be equality between the diagrams of $I$ and $I_\mu$, for $\mu$ arbitrarily large. This is shown in the following example.

\begin{example}
\label{ex:no-equality}
Let $I$ be an ideal in $\K\{x,y\}$ generated by $f_1$ and $f_2$ of the form
\begin{align*}
f_1 &= x^3y+xy^4+xy^5+xy^6+\dots,\\
f_2 &= x^2y^3+y^6+y^7+y^8+\dots.
\end{align*}
Then, for every $\mu\geq5$, we have $y^2\cdot j^\mu f_1-x\cdot j^\mu f_2=xy^{\mu+1}$, hence $(1,\mu+1)\in\NN(I_\mu)$. However, $(1,k)\notin\NN(I)$ for any $k\geq 1$.

We prove the latter by contradiction. Suppose there exists $F\in I$ with $\inexp(F)=(1,k_0)$ for some $k_0\in\N$. Choose $h_1,h_2\in\K\{x,y\}$ so that $F=h_1f_1+h_2f_2$. Let $ax^{\alpha_1}y^{\alpha_2}$ and $bx^{\beta_1}y^{\beta_2}$ be the initial terms of $h_1$ and $h_2$ respectively. Clearly, $\inform(h_1)\cdot\inform(f_1)+\inform(h_2)\cdot\inform(f_2)=0$, for otherwise the $x$-component of $\inexp(h_1f_1+h_2f_2)$ would not be $1$. Therefore, $ax^{\alpha_1+3}y^{\alpha_2+1}+bx^{\beta_1+2}y^{\beta_2+3}=0$. It follows that $\alpha_1+1=\beta_1$, $\alpha_2=\beta_2+2$, and $a+b=0$. Consequently,
\begin{equation}
\label{eq:zero}
\inform(h_1)\cdot f_1+\inform(h_2)\cdot f_2=0.
\end{equation}
Now, set $h_i^{(1)}\coloneqq h_i-\inform(h_i)$, $i=1,2$. By \eqref{eq:zero}, we get $h_1^{(1)}f_1+h_2^{(1)}f_2=F$. Hence, by repeating the above argument, $\inform(h_1^{(1)})\cdot f_1+\inform(h_2^{(1)})\cdot f_2=0$. We can thus set $h_i^{(2)}\coloneqq h_i^{(1)}-\inform(h_i^{(1)})$, $i=1,2$, and again obtain $h_1^{(2)}f_1+h_2^{(2)}f_2=F$.
By induction, if $h_i^{(j)}=h_i^{(j-1)}-\inform(h_i^{(j-1)})$, $i=1,2$, then
\begin{equation}
\label{eq:j}
h_1^{(j)}f_1+h_2^{(j)}f_2=F,\mathrm{\ for\ all\ } j.
\end{equation}
Note that, for every $j\geq1$, the initial exponent of $h_i^{(j+1)}$ is strictly greater than that of $h_i^{(j)}$, by construction.
Therefore, by the Krull Intersection Theorem, the sequences $(h_1^{(j)})_{j\geq1}$ and $(h_2^{(j)})_{j\geq1}$ converge to zero in the Krull topology of $\K\{x,y\}$. It follows from \eqref{eq:j} that $0\!\cdot\!f_1+0\!\cdot\!f_2=F$, hence $F=0$, which contradicts the choice of $F$.\qed
\end{example}

\begin{remark}
\label{rem:same-as-HS-function}
Identifying those ideals $I$ (or, more precisely, those systems of generators) in $\K\{x\}$ for which the sequence of diagrams $(\NN(I_\mu))_\mu$ stabilizes seems important from the point of view of singularity theory. In fact, the singularities defined by such ideals are precisely those whose Hilbert-Samuel function is finitely determined. Indeed, for an ideal $J$ in $\K\{x\}$, let $H_J(k)=\dim_\K\K\{x\}/(J+\mm_x^{k+1})$ denote the Hilbert-Samuel function of $\K\{x\}/J$. It follows from Proposition~\ref{prop:Hir1} that
\[
H_J(k)\,=\,\#\,(\N^m\setminus\NN(J))\cap\{\beta\in\N^m:|\beta|\leq k\}.
\]
Therefore, by Lemma~\ref{lem:diagram-up-to-l}(i), we have $\NN(I)=\NN(I_\mu)$ if and only if $H_I(k)=H_{I_\mu}(k)$ for all $k\in\N$.
\end{remark}

We conjecture that the diagrams of $I_\mu$ stabilize in case when $I$ is generated by a regular sequence.
Recall that an $s$-tuple $(f_1,\dots,f_s)$ forms a $\K\{x\}$-regular sequence when $f_1,\dots,f_s$ generate a proper ideal, $f_1\neq0$, and $f_{i+1}$ is not a zerodivisor in $\K\{x\}/(f_1,\dots,f_i)$ for $i=1,\dots,s-1$.

\begin{conjecture}
\label{conj:diagram-fin-det}
Let $I=(f_1,\dots,f_s)$ be an ideal in $\K\{x\}$, and for $\mu\in\N$, let $I_\mu$ denote the ideal generated in $\K\{x\}$ by the $\mu$-jets $j^\mu f_1,\dots,j^\mu f_s$. Suppose that $f_1,\dots,f_s$ form a regular sequence in $\K\{x\}$. Then, there exists $\mu_0\geq1$ such that $\NN(I)=\NN(I_\mu)$, for all $\mu\geq\mu_0$ (after a generic change of coordinates $x$, if needed).
\end{conjecture}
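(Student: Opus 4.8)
The plan is to reduce the problem to a statement about the Hilbert–Samuel function, using Remark~\ref{rem:same-as-HS-function}, and then to exploit the rigidity of complete intersections. By that remark, $\NN(I)=\NN(I_\mu)$ holds if and only if $H_I(k)=H_{I_\mu}(k)$ for all $k\in\N$; and by Lemma~\ref{lem:diagram-up-to-l}(i) we always have $\NN(I_\mu)\supset\NN(I)$ for $\mu\geq l_0$, hence $H_{I_\mu}(k)\leq H_I(k)$ for every $k$ once $\mu$ is large. So the real content is the reverse inequality, i.e.\ showing that the jet ideals $I_\mu$ are not ``too big'': their codimension growth must eventually match that of $I$.

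First I would recall that, since $f_1,\dots,f_s$ is a regular sequence, $\K\{x\}/I$ is a complete intersection of dimension $m-s$, and its Hilbert–Samuel \emph{multiplicity} and dimension are the only asymptotic data of $H_I$. The key step is to produce a single $\mu_0$ such that for all $\mu\geq\mu_0$ the jets $j^{\mu}f_1,\dots,j^{\mu}f_s$ again form a regular sequence \emph{and} the quotient $\K\{x\}/I_\mu$ has the same Hilbert–Samuel polynomial as $\K\{x\}/I$. For the regular-sequence part one can argue by induction on $s$: the locus where $f_{i+1}$ is a zerodivisor modulo $(f_1,\dots,f_i)$ is the union of the associated primes, all of codimension $\leq i$ in the $(m-i)$-dimensional ring $\K\{x\}/(f_1,\dots,f_i)$, and a sufficiently high jet perturbation stays transverse to each of them — this is exactly the kind of statement handled by Theorem~\ref{thm:flat-fin-det-comp-int} and its corollary on finite determinacy of complete intersections (Corollary~\ref{cor:fin-det-comp-int}), so one may simply invoke it. For the Hilbert–Samuel polynomial part, note that a complete intersection defined by forms of given orders $d_i=\mathrm{ord}(f_i)$ has Hilbert series $\prod_i(1-t^{d_i})/(1-t)^m$; as long as the jet $j^{\mu}f_i$ has the same order $d_i$ as $f_i$ (true for $\mu\geq\max_i d_i$) and the $j^{\mu}f_i$ form a regular sequence, the associated graded ring of $\K\{x\}/I_\mu$ with respect to $\mm_x$ need not literally equal that of $\K\{x\}/I$, but both have the same Hilbert polynomial because degree and dimension are determined by the orders $d_i$ alone.

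The step I expect to be the main obstacle is controlling the \emph{lower-order} terms of the Hilbert–Samuel function, not just its leading (polynomial) behaviour: Lemma~\ref{lem:diagram-up-to-l}(ii) already gives agreement of $\NN(I)$ and $\NN(I_\mu)$ in all lengths $\leq l$ once $\mu\geq l$, so the diagrams agree on any prescribed finite window; the difficulty is ruling out that, as we let $l\to\infty$, new vertices of $\NN(I_\mu)$ keep appearing at lengths slightly above $\mu$ (precisely the phenomenon of Example~\ref{ex:no-equality}). The complete-intersection hypothesis should forbid this: if $\NN(I_\mu)$ strictly contained $\NN(I)$ for arbitrarily large $\mu$, then $H_{I_\mu}(k)<H_I(k)$ for some $k$ depending on $\mu$, forcing $\K\{x\}/I_\mu$ to have either strictly smaller dimension or strictly smaller multiplicity than $\K\{x\}/I$ — but both are pinned down by the orders $d_i$ once the $j^\mu f_i$ form a regular sequence, a contradiction. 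Making this last implication rigorous, in particular handling the caveat ``after a generic change of coordinates'' (needed so that the $f_i$ are in sufficiently general position for the transversality/regular-sequence estimates, e.g.\ so that their initial forms behave well under Noether normalization), is where the careful work lies; the genericity is used precisely to ensure that the dimension of the common zero locus of the initial forms, and hence the stabilized diagram, is the expected $m-s$.
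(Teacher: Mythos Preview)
This statement is labelled a \emph{conjecture} in the paper and is not proved there; the paper only offers the partial justification that $\dim(\K\{x\}/I)=\dim(\K\{x\}/I_\mu)$ for $\mu\gg0$. So there is no proof to compare against, and the question is whether your outline actually settles the conjecture. It does not; there are two genuine gaps.

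\textbf{Gap 1: the multiplicity claim is false.} You assert that for a regular sequence the Hilbert--Samuel multiplicity is ``determined by the orders $d_i$ alone''. This holds only when the \emph{initial forms} $\inform(f_i)$ themselves form a regular sequence in the polynomial ring; in general the multiplicity can be strictly larger than $\prod_i d_i$. For a concrete counterexample, take $f_1=xy$ and $f_2=x^2+y^3$ in $\K\{x,y\}$: this is a regular sequence with $d_1=d_2=2$, yet $\K\{x,y\}/(f_1,f_2)$ has length $5$ (a $\K$-basis is $1,x,y,y^2,y^3$), not $4$. Since multiplicity is an invariant of the local ring, no change of coordinates repairs this; the ``generic change of coordinates'' clause in the conjecture cannot force the initial forms into a regular sequence. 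Thus your mechanism for pinning down $\mathrm{mult}(\K\{x\}/I_\mu)$ collapses.

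\textbf{Gap 2: equal dimension and multiplicity do not give equal Hilbert--Samuel functions.} Your concluding step reads: if $\NN(I_\mu)\supsetneq\NN(I)$ then $H_{I_\mu}(k)<H_I(k)$ for some $k$, ``forcing $\K\{x\}/I_\mu$ to have either strictly smaller dimension or strictly smaller multiplicity''. This implication is false. Set $D(k)\coloneqq H_I(k)-H_{I_\mu}(k)$; by Lemma~\ref{lem:diagram-up-to-l}(i) we have $D(k)\geq0$, and $D(k)$ equals the number of points of $\NN(I_\mu)\setminus\NN(I)$ of length $\leq k$, hence is non-decreasing. If dimension and multiplicity agree, $D$ is eventually a polynomial of degree at most $m-s-1$; nothing prevents this polynomial from being nonzero with positive leading coefficient. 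Geometrically, the extra region $\NN(I_\mu)\setminus\NN(I)$ can be a ``thin'' staircase sitting just above length $\mu$, contributing a lower-order polynomial to $D$ while leaving the leading term untouched --- exactly the phenomenon exhibited in Example~\ref{ex:no-equality}. Matching only the top coefficient of the Hilbert polynomial is therefore not enough; one would need the \emph{entire} Hilbert polynomial of $I_\mu$ to coincide with that of $I$, and your argument supplies no reason for the lower coefficients to agree. That control of the full Hilbert--Samuel function is precisely what the conjecture is asking for.
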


The above conjecture is partly justified by the following observation:
Let $I$ and $\{I_\mu\}_{\mu\in\N}$ be as above.
If $f_1,\dots,f_s$ form a regular sequence in $\K\{x\}$, then there exists $\mu_0\geq1$ such that
\[
\dim(\K\{x\}/I)=\dim(\K\{x\}/I_\mu)\,,
\]
for all $\mu\geq\mu_0$.

Indeed, as an ideal generated by $s$ elements in an $m$-dimensional ring, every $I_\mu$ satisfies inequality
\[
m-s\leq\dim(\K\{x\}/I_\mu).
\]
On the other hand, by \cite[Ch.\,2, Prop.\,5.3]{Tou}, there exists $\mu_0\geq1$ such that
\[
\dim(\K\{x\}/I_\mu)\,\leq\,\dim(\K\{x\}/I)\,,
\]
for all $\mu\geq\mu_0$. Finally, if $f_1,\dots,f_s$ form a regular sequence, then $\dim(\K\{x\}/I)=m-s$.

%%%%%%%%%%%%%%%%%%%%%%%%%%%%%%%%%%%%%%%%%%%%%%%%%%
%%%%%%%%%%%%%%%%%%%%%%%%%%%%%%%%%%%%%%%%%%%%%%%%%%
%%%%%%%%%%%%%%%%%%%%%%%%%%%%%%%%%%%%%%%%%%%%%%%%%%
\section{Finite determinacy of flatness}
\label{sec:determinacy}

We now turn to the problem of finite determinacy of flatness of analytic mappings.
Throughout this section $X$ will denote a $\K$-analytic space, and $\vp:X\to\K^n$ will be a $\K$-analytic mapping. Since our considerations are local, we can assume without loss of generality that $X$ is a subspace of $\K^m$ (for some $m\geq1$), $0\in X$, and $\vp(0)=0$.
Let us begin with finite mappings.

%%%%%%%%%%%%%%%%%%%%%%%%%%%%%%%%%%%%%%%%%%%%%%%%%%
\subsection{Finite mappings}

\begin{proposition}
\label{prop:flat-fin-det-finite}
Let $X$ be a $\K$-analytic subspace of $\K^m$, with $0\in X$ and $\OO_{X,0}=\K\{x\}/I$. Let $\vp=(\vp_1,\dots,\vp_n):X\to\K^n$, $\vp(0)=0$, be a finite $\K$-analytic mapping. Then, there exists $\mu_0\geq1$ such that the following conditions are equivalent:
\begin{itemize}
\item[(i)] $\vp$ is flat at zero.
\item[(ii)] For every $\mu\geq\mu_0$, $(j^\mu\vp_1,\dots,j^\mu\vp_n):X\to\K^n$ is flat at zero.
\item[(iii)] There exists $\mu\geq\mu_0$ such that $(j^\mu\vp_1,\dots,j^\mu\vp_n):X\to\K^n$ is flat at zero.
\end{itemize}
If, moreover, $I=(h_1,\dots,h_s)$ and, for $\mu\in\N$, $X_\mu$ denotes a local model defined at $0\in\K^m$ by $\OO_{X_\mu,0}=\K\{x\}/(j^\mu h_1,\dots,j^\mu h_s)$, then the above conditions are equivalent to the following:
\begin{itemize}
\item[(ii')] For all $\mu\geq\mu_0$, $(j^\mu\vp_1,\dots,j^\mu\vp_n):X_\mu\to\K^n$ is flat at zero.
\item[(iii')] There exists $\mu\geq\mu_0$ such that $(j^\mu\vp_1,\dots,j^\mu\vp_n):X_\mu\to\K^n$ is flat at zero.
\end{itemize}
\end{proposition}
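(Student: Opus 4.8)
Here is how I would approach it. The plan is to pass to the graph of $\vp$, translate flatness into Hironaka's combinatorial criterion (Propositions~\ref{prop:Hir1} and~\ref{prop:Hir2}), use the diagram--approximation results of Section~\ref{sec:approx} to show that the relevant combinatorics is unchanged under taking jets, and finish with the characterization of flatness of a \emph{finite} map germ in terms of fibre degrees.

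\emph{Reduction to Hironaka's criterion.} Fix representatives $\tilde\vp_j\in\K\{x\}$ of $\vp_j$, and put $A=\K\{y\}$, $A\{x\}=\K\{y,x\}$, and $\mathcal{I}=I\cdot\K\{y,x\}+(y_1-\tilde\vp_1,\dots,y_n-\tilde\vp_n)$. Then $\K\{y,x\}/\mathcal{I}$ is the local ring of the graph of $\vp$, canonically isomorphic to $\OO_{X,0}$ in such a way that the $A$-module structure from $A\hookrightarrow A\{x\}$ corresponds to $\vp^*_0$; so $\vp$ is flat at $0$ iff $A\{x\}/\mathcal{I}$ is $A$-flat, which by Proposition~\ref{prop:Hir2} holds iff $\kappa\colon A\{x\}^\Delta\to A\{x\}/\mathcal{I}$ is bijective, where $\Delta=\N^m\setminus\NN(\mathcal{I}(0))$ and $\mathcal{I}(0)=I+(\tilde\vp_1,\dots,\tilde\vp_n)\subset\K\{x\}$. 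The same applies to $j^\mu\vp\colon X\to\K^n$ with $\mathcal{I}(0)$ replaced by $\mathcal{I}_\mu(0)=I+(j^\mu\tilde\vp_1,\dots,j^\mu\tilde\vp_n)$, and to $j^\mu\vp\colon X_\mu\to\K^n$ with $\mathcal{I}(0)$ replaced by $\mathcal{J}_\mu(0)=(j^\mu h_1,\dots,j^\mu h_s,j^\mu\tilde\vp_1,\dots,j^\mu\tilde\vp_n)$.

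\emph{Finiteness and stability of the diagram.} Since $\vp$ is finite, $\OO_{X,0}/\vp^*_0(\mm_y)\OO_{X,0}=\K\{x\}/\mathcal{I}(0)$ is finite-dimensional over $\K$, i.e. $\mathcal{I}(0)$ is $\mm_x$-primary, i.e. $\mm_x^k\subset\mathcal{I}(0)$ for some $k$, i.e. $\Delta$ is finite. As $\mathcal{I}(0)=(h_1,\dots,h_s,\tilde\vp_1,\dots,\tilde\vp_n)$ contains $\mm_x^k$, Corollary~\ref{cor:2diagram} gives $\NN(\mathcal{J}_\mu(0))=\NN(\mathcal{I}(0))$ for $\mu\geq k$; the proof of Lemma~\ref{lem:diagram-up-to-l} goes through verbatim when only part of the generating set is replaced by jets, so likewise $\NN(\mathcal{I}_\mu(0))=\NN(\mathcal{I}(0))$ for $\mu\geq k$. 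Put $\mu_0=k$. Then for $\mu\geq\mu_0$ the complements of $\NN(\mathcal{I}_\mu(0))$ and $\NN(\mathcal{J}_\mu(0))$ all equal $\Delta$; in particular (Proposition~\ref{prop:Hir1}) $\mathcal{I}_\mu(0)$ and $\mathcal{J}_\mu(0)$ are $\mm_x$-primary, so $j^\mu\vp\colon X\to\K^n$ and $j^\mu\vp\colon X_\mu\to\K^n$ are finite, and $\dim_\K\K\{x\}/\mathcal{I}_\mu(0)=\dim_\K\K\{x\}/\mathcal{J}_\mu(0)=\dim_\K\K\{x\}/\mathcal{I}(0)=:r$.

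\emph{Flatness of finite maps and conclusion.} For any finite analytic germ $\psi\colon(Z,0)\to(\K^n,0)$, $\psi$ is flat at $0$ iff the finite $\K\{y\}$-module $\OO_{Z,0}$ is free, iff its minimal number of generators $\dim_\K\OO_{Z,0}/\psi^*(\mm_y)\OO_{Z,0}$ equals its rank $\rho(\psi):=\dim_{\mathrm{Frac}\K\{y\}}(\OO_{Z,0}\otimes_{\K\{y\}}\mathrm{Frac}\K\{y\})$ (a finite module over the domain $\K\{y\}$ whose minimal number of generators equals its rank is free; the converse is clear; this is also the statement that $\OO_{Z,0}$ is Cohen--Macaulay). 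So, writing $\rho=\rho(\vp)$, the proposition follows once one knows that $\rho(j^\mu\vp\colon X\to\K^n)=\rho(j^\mu\vp\colon X_\mu\to\K^n)=\rho$ for $\mu\geq\mu_0$ (possibly after enlarging $\mu_0$): by the previous paragraph each of the finite maps $j^\mu\vp$ then has minimal number of generators $r$ and rank $\rho$, hence is flat at $0$ iff $r=\rho$, exactly as $\vp$ is; and since $r,\rho$ are independent of $\mu$ this gives (i)$\Leftrightarrow$(ii)$\Leftrightarrow$(iii) and (ii$'$)$\Leftrightarrow$(iii$'$), with (i) equivalent to each --- (i)$\Rightarrow$(ii),(ii$'$) since ``flat iff $r=\rho$'' is uniform in $\mu\geq\mu_0$, (ii)$\Rightarrow$(iii) and (ii$'$)$\Rightarrow$(iii$'$) trivially, and (iii) or (iii$'$)$\Rightarrow$(i) since flatness of one jet forces $r=\rho$.

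\emph{The main obstacle.} Proving that boxed rank equality is, I expect, the crux --- it is the finite determinacy of the generic fibre degree. One route: when $\dim\OO_{X,0}=n$, $\rho=e\bigl((\tilde\vp_1,\dots,\tilde\vp_n),\OO_{X,0}\bigr)$ is the Hilbert--Samuel multiplicity of a parameter ideal (and if $\dim<n$ then $\rho=0$ and all maps are non-flat, so the argument is vacuous there); after base change to $\overline\K$ (which affects neither ranks nor flatness) this is the number of points of $\vp^{-1}(a)$ near $0$, with multiplicities, for generic $a$, and at each such point the ideal $(h_1,\dots,h_s,\tilde\vp_1-a_1,\dots,\tilde\vp_n-a_n)$ is $\mm$-primary of colength at most $\rho$. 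Applying Corollary~\ref{cor:2diagram} there to this generating set --- whose $\mu$-jets are $j^\mu h_i$ and $j^\mu\tilde\vp_j-a_j$ --- shows these fibre degrees are unchanged upon passing to $\mu$-jets once $\mu\geq\rho$, both over $X$ and over $X_\mu$, so $\rho(j^\mu\vp)=\rho$; one then takes $\mu_0=\max\{k,\rho\}$. The delicate point, and the only place the argument needs more than bookkeeping with diagrams, is that Section~\ref{sec:approx} controls a fixed ideal, so one must combine it with a semicontinuity / conservation-of-number argument to carry the conclusion over to a generic fibre.
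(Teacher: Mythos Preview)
Your setup --- passing to the graph ideal $\mathcal I\subset\K\{y,x\}$, invoking Propositions~\ref{prop:Hir1} and~\ref{prop:Hir2}, and using finiteness plus Corollary~\ref{cor:2diagram} to get $\NN(\mathcal I(0))=\NN(\mathcal I_\mu(0))=\NN(\mathcal J_\mu(0))$ for $\mu\geq\mu_0$ --- is exactly what the paper does. The divergence comes after that, and it is where your proposal runs into trouble.

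You recast flatness of a finite map as ``minimal number of generators $r$ equals rank $\rho$'' and then try to show both $r$ and $\rho$ are jet--stable. Stability of $r$ is immediate from the diagram equality. Stability of $\rho$, however, is genuinely hard in this generality: the $\K\{y\}$-module structure on $\OO_{X,0}$ changes with $\mu$, so the localizations at $\mathrm{Frac}\,\K\{y\}$ are a priori unrelated; your sketch via Hilbert--Samuel multiplicity and Corollary~\ref{cor:2diagram} ``at a generic fibre point'' conflates jets at the origin with jets at a nearby point, and in any case only yields $\rho=e$ on the Cohen--Macaulay locus, whereas the Proposition allows $X$ arbitrary. The ``semicontinuity / conservation-of-number'' step you flag as delicate is in fact the whole content of the result, and over $\K=\R$ there is no obvious geometric substitute.

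The paper avoids this obstacle entirely. After establishing that $\Delta=\N^m\setminus\NN(\mathcal I(0))$ is finite and unchanged for $\mu\geq\mu_0$, it transfers \emph{witnesses of non-flatness} directly: if $G=\sum g_ih_i+\sum q_j(y_j-j^\mu\vp_j)\in\tilde{\mathcal I}_\mu$ has $\inexp(G)\in\Delta$, then $F=\sum g_ih_i+\sum q_j(y_j-\vp_j)\in\mathcal I$ satisfies $j^\mu F=j^\mu G$, hence $\inexp(F)=\inexp(G)\in\Delta$ by Remark~\ref{rem:jet-exp} (since $|\inexp(G)|\leq\mu_0\leq\mu$). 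The same swap works in the other direction and with $h_i$ replaced by $j^\mu h_i$. Thus non-flatness of any one of $\vp$, $j^\mu\vp:X\to\K^n$, $j^\mu\vp:X_\mu\to\K^n$ forces non-flatness of the others, and no control of the generic rank is needed. This is the missing idea in your proposal: rather than comparing numerical invariants, move the actual element of the ideal.
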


\begin{proof}
Identifying $X$ with the graph of $\vp$, we can write $\OO_{X,0}=\K\{y,x\}/J$, where $y=(y_1,\dots,y_n)$, $x=(x_1,\dots,x_m)$, and
\[
J=(h_1(x),\dots,h_s(x),y_1-\vp_1(x),\dots,y_n-\vp_n(x))\cdot\K\{y,x\}\,.
\]
For $\mu\geq1$, set
\[
\tilde{J}_\mu\coloneqq(h_1,\dots,h_s,y_1-j^\mu\vp_1,\dots,y_n-j^\mu\vp_n)\cdot\K\{y,x\}\,,
\]
and
\[
J_\mu\coloneqq(j^\mu{h_1},\dots,j^\mu{h_s},y_1-j^\mu\vp_1,\dots,y_n-j^\mu\vp_n)\cdot\K\{y,x\}\,.
\]
The finiteness of $\vp$ implies that $J(0)\supset\mm_x^k$ for some integer $k$ (where the evaluation is at $y=0$). Let $\mu_0$ denote the least such $k$. We shall prove that the theorem holds with this choice of $\mu_0$.

Let $l_0$ denote the maximum of lengths of the vertices of $\NN(J(0))$. By the proof of Lemma~\ref{lem:diagram-up-to-l}(i), we have
\[
\NN(J(0))\ \subset\ \NN(\tilde{J}_\mu(0))\ \subset\ \NN(J_\mu(0))\,,
\]
for all $\mu\geq l_0$. Hence, by Corollary~\ref{cor:2diagram},
\[
\NN(J(0))\ =\ \NN(\tilde{J}_\mu(0))\ =\ \NN(J_\mu(0))\,,
\]
for all $\mu\geq\mu_0$. Note also that $|\beta|\leq\mu_0$ for all $\beta\in\N^m\setminus\NN(J(0))$.

For the proof of (i)\,$\Rightarrow$\,(ii), suppose there exists $\mu\geq\mu_0$ such that $(j^\mu\vp_1,\dots,j^\mu\vp_n):X\to\K^n$ is not flat at zero. Then, by Proposition~\ref{prop:Hir2}, one can choose a nonzero $G\in\tilde{J}_\mu$ with $\inexp(G)\in\N^m\setminus\NN(\tilde{J}_\mu(0))$. Write $G=\sum_{i=1}^sg_i\cdot h_i+\sum_{j=1}^nq_j\cdot(y_j-j^\mu\vp_j)$, for some $g_i,q_j\in\K\{y,x\}$.
Define $F:=\sum_{i=1}^sg_i\cdot h_i+\sum_{j=1}^nq_j\cdot(y_j-\vp_j)$. Then $F\in J$, and $\inexp(F)=\inexp(G)$ (by Remark~\ref{rem:jet-exp}), hence $\inexp(F)\in\N^m\setminus\NN(J(0))$. This proves that $\vp$ is not flat at zero, by Proposition~\ref{prop:Hir2} again.

The implication (ii)\,$\Rightarrow$\,(iii) is trivial.
To prove that (iii)\,$\Rightarrow$\,(i), suppose that $\vp$ is not flat at zero, and choose $F=\sum_{i=1}^sg_i\cdot h_i+ \sum_{j=1}^nq_j\cdot(y_j-\vp_j)$ with $\inexp(F)\in\N^m\setminus\NN(J(0))$. Given an arbitrary $\mu\geq\mu_0$, set $G:=\sum_{i=1}^sg_i\cdot h_i+\sum_{j=1}^nq_j\cdot(y_j-j^\mu\vp_j)$. Then $G\in\tilde{J}_\mu$ and $\inexp(G)=\inexp(F)\in\N^m\setminus\NN(\tilde{J}_\mu(0))$, which proves that $(j^\mu\vp_1,\dots,j^\mu\vp_n):X\to\K^n$ is not flat at zero.

The proof of implications (i)\,$\Rightarrow$\,(ii') and (iii')\,$\Rightarrow$\,(i) is analogous, with ideal $\tilde{J}_\mu$ replaced by $J_\mu$.
\end{proof}

\begin{remark}
\label{rem:Milnor-will-suffice}
Note that in Proposition~\ref{prop:flat-fin-det-finite} one can take $\mu_0$ to be the Milnor number of the fibre $\vp^{-1}(0)$; i.e.,
\[
\mu_0=\dim_\K(\K\{x\}/J(0))\,.
\]
Indeed, since $\dim_\K(\K\{x\}/J(0))$ is precisely the cardinality of the complement of $\NN(J(0))$, one readily sees that with this choice of $\mu_0$ we have $J(0)\supset\mm_x^{\mu_0}$.
\end{remark}

Proposition~\ref{prop:flat-fin-det-finite} implies that flatness of finite analytic mappings is finitely determined, in the following sense.

\begin{theorem}
\label{thm:flat-fin-det-finite}
Let $X$ be a $\K$-analytic subspace of $\K^m$, with $0\in X$ and $\OO_{X,0}=\K\{x\}/I$. Let $\vp=(\vp_1,\dots,\vp_n):X\to\K^n$, $\vp(0)=0$, be a finite $\K$-analytic mapping. Then, there exists $\mu_0\geq1$ such that the following conditions are equivalent:
\begin{itemize}
\item[(i)] $\vp$ is flat at zero.
\item[(ii)] For every $\mu\geq\mu_0$ and for every analytic mapping $\psi=(\psi_1,\dots,\psi_n):X\to\K^n$ with $j^\mu\psi_j=j^\mu\vp_j$, $j=1,\dots,n$, $\psi$ is flat at zero.
\item[(iii)] There exist $\mu\geq\mu_0$ and an analytic mapping $\psi=(\psi_1,\dots,\psi_n):X\to\K^n$ such that $\psi$ is flat at zero and $j^\mu\psi_j=j^\mu\vp_j$, $j=1,\dots,n$.
\end{itemize}
If, moreover, $I=(h_1,\dots,h_s)$ and, for $\mu\in\N$, $X_\mu$ denotes a local model defined at $0\in\K^m$ by $\OO_{X_\mu,0}=\K\{x\}/(j^\mu h_1,\dots,j^\mu h_s)$, then the above conditions are equivalent to the following:
\begin{itemize}
\item[(ii')] For every $\mu\geq\mu_0$ and for every analytic mapping $\psi=(\psi_1,\dots,\psi_n):X_\mu\to\K^n$ with $j^\mu\psi_j=j^\mu\vp_j$, $j=1,\dots,n$, $\psi$ is flat at zero.
\item[(iii')] There exist $\mu\geq\mu_0$ and an analytic mapping $\psi=(\psi_1,\dots,\psi_n):X_\mu\to\K^n$ such that $\psi$ is flat at zero and $j^\mu\psi_j=j^\mu\vp_j$, $j=1,\dots,n$.
\end{itemize}
\end{theorem}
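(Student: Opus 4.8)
The plan is to deduce Theorem~\ref{thm:flat-fin-det-finite} from Proposition~\ref{prop:flat-fin-det-finite} via a simple rigidity principle: as soon as an analytic mapping agrees with $\vp$ up to order $\mu_0$, it has the \emph{same} scheme-theoretic fibre over zero, and hence the single threshold $\mu_0$ produced by Proposition~\ref{prop:flat-fin-det-finite} for $\vp$ serves every competitor $\psi$ at once.

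First I would fix $\mu_0\geq 1$ as in Proposition~\ref{prop:flat-fin-det-finite} applied to $\vp$, taking, by Remark~\ref{rem:Milnor-will-suffice}, $\mu_0=\dim_\K(\K\{x\}/J(0))$, where $J\subset\K\{y,x\}$ is the graph ideal of $\vp$ from the proof of that proposition; in particular $J(0)\supseteq\mm_x^{\mu_0}$. The key step is the following claim: for $\mu\geq\mu_0$ and any analytic mapping $\psi=(\psi_1,\dots,\psi_n)\colon X\to\K^n$ with $j^\mu\psi_j=j^\mu\vp_j$ for $j=1,\dots,n$, the graph ideal $J_\psi$ of $\psi$ satisfies $J_\psi(0)=J(0)$. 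Evaluating at $y=0$ gives $J(0)=(h_1,\dots,h_s,\vp_1,\dots,\vp_n)\K\{x\}$ and $J_\psi(0)=(h_1,\dots,h_s,\psi_1,\dots,\psi_n)\K\{x\}$. Since $\psi_j-\vp_j\in\mm_x^{\mu+1}\subseteq\mm_x^{\mu_0}\subseteq J(0)$, each $\psi_j$ lies in $J(0)$, so $J_\psi(0)\subseteq J(0)$; conversely $J(0)=J_\psi(0)+\mm_x^{\mu+1}\subseteq J_\psi(0)+\mm_x J(0)$ because $\mm_x^{\mu+1}\subseteq\mm_x\mm_x^{\mu_0}\subseteq\mm_x J(0)$, so the finitely generated $\K\{x\}$-module $J(0)/J_\psi(0)$ equals $\mm_x\cdot\bigl(J(0)/J_\psi(0)\bigr)$ and Nakayama's lemma forces equality. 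In particular $\psi$ is automatically finite at zero, and Proposition~\ref{prop:flat-fin-det-finite} applies to $\psi$ with the \emph{same} $\mu_0$ (again by Remark~\ref{rem:Milnor-will-suffice}, the two fibre Milnor numbers being equal).

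The equivalences are then formal bookkeeping. For (i)$\Rightarrow$(ii): if $\vp$ is flat, Proposition~\ref{prop:flat-fin-det-finite}(i)$\Rightarrow$(ii) gives that $(j^\mu\vp_1,\dots,j^\mu\vp_n)\colon X\to\K^n$ is flat for every $\mu\geq\mu_0$; as $j^\mu\psi_j=j^\mu\vp_j$ this is literally the mapping $(j^\mu\psi_1,\dots,j^\mu\psi_n)\colon X\to\K^n$, so Proposition~\ref{prop:flat-fin-det-finite}(iii)$\Rightarrow$(i) applied to $\psi$ yields flatness of $\psi$. Taking $\mu=\mu_0$, $\psi=\vp$ gives (ii)$\Rightarrow$(iii). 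For (iii)$\Rightarrow$(i): given $\mu\geq\mu_0$ and a flat $\psi$ with $j^\mu\psi_j=j^\mu\vp_j$, Proposition~\ref{prop:flat-fin-det-finite}(i)$\Rightarrow$(ii) for $\psi$ shows $(j^\mu\psi_1,\dots,j^\mu\psi_n)\colon X\to\K^n$, i.e.\ $(j^\mu\vp_1,\dots,j^\mu\vp_n)\colon X\to\K^n$, is flat, whence $\vp$ is flat by Proposition~\ref{prop:flat-fin-det-finite}(iii)$\Rightarrow$(i) for $\vp$. The equivalences with (ii') and (iii') follow verbatim, with $X$ replaced by $X_\mu$ and $J$ by $J_\mu$ (notation of the proof of Proposition~\ref{prop:flat-fin-det-finite}): one needs $J_\mu(0)\supseteq\mm_x^{\mu_0}$, which holds because $\NN(J_\mu(0))=\NN(J(0))$ for $\mu\geq\mu_0$ --- established there --- puts the complement of $\NN(J_\mu(0))$ in degrees below $\mu_0$, and then Proposition~\ref{prop:Hir1} applies; the Nakayama argument and the chaining through the `moreover' part of Proposition~\ref{prop:flat-fin-det-finite} are identical.

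The only real content beyond Proposition~\ref{prop:flat-fin-det-finite} is the rigidity identity $J_\psi(0)=J(0)$, so I anticipate no serious obstacle. The one point that must be handled with care is that the threshold $\mu_0$ genuinely does not depend on $\psi$; this is exactly why it is convenient to fix $\mu_0$ as the fibre Milnor number via Remark~\ref{rem:Milnor-will-suffice} rather than as whatever integer a separate application of Proposition~\ref{prop:flat-fin-det-finite} to each $\psi$ would yield.
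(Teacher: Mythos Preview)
Your proposal is correct and follows the same overall strategy as the paper: fix $\mu_0=\dim_\K(\K\{x\}/J(0))$, establish that every competitor $\psi$ with matching $\mu_0$-jet has the same fibre data at zero, and then chain through Proposition~\ref{prop:flat-fin-det-finite} (with Remark~\ref{rem:Milnor-will-suffice}) in both directions. The difference lies in how the rigidity step is carried out. The paper proves only the equality of diagrams $\NN(Q(0))=\NN(J(0))$, by two separate ``swap the generators'' arguments (one for the vertices, one for the complement). You prove the stronger statement $J_\psi(0)=J(0)$ directly, via the inclusion $\mm_x^{\mu+1}\subseteq\mm_x J(0)$ and Nakayama's lemma. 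Your route is shorter and gives more---equality of ideals rather than just of initial diagrams---while the paper's argument stays within its diagram-centric toolkit and avoids invoking Nakayama. Either suffices, since only the equality of Milnor numbers $\dim_\K(\K\{x\}/J_\psi(0))=\mu_0$ is actually needed.

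One small remark on the primed part: your appeal to Proposition~\ref{prop:Hir1} for the inclusion $J_\mu(0)\supseteq\mm_x^{\mu_0}$ is a bit elliptical. What you really use is that over the field $A=\K$ the map $\kappa$ is automatically bijective (Proposition~\ref{prop:Hir2}), so $\dim_\K(\K\{x\}/J_\mu(0))=\#\bigl(\N^m\setminus\NN(J_\mu(0))\bigr)=\mu_0$, and then the reasoning of Remark~\ref{rem:Milnor-will-suffice} gives $\mm_x^{\mu_0}\subseteq J_\mu(0)$. Alternatively, and more in the spirit of your main argument, the same Nakayama trick applied to $J(0)$ and $J_\mu(0)$ (using $h_i-j^\mu h_i,\ \vp_j-j^\mu\vp_j\in\mm_x^{\mu+1}$) gives $J_\mu(0)=J(0)$ outright, which is cleaner.
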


\begin{proof}
Let $J=(h_1(x),\dots,h_s(x),y_1-\vp_1(x),\dots,y_n-\vp_n(x))\cdot\K\{y,x\}$, and let $\mu_0=\dim_\K(\K\{x\}/J(0))$.
Let $\psi=(\psi_1,\dots,\psi_n):X\to\K^n$ be an arbitrary analytic mapping with $j^{\mu_0}\psi_j=j^{\mu_0}\vp_j$, $j=1,\dots,n$, and let
\[
Q\coloneqq(h_1(x),\dots,h_s(x),y_1-\psi_1(x),\dots,y_n-\psi_n(x))\cdot\K\{y,x\}\,.
\]
We shall show that then $\NN(Q(0))=\NN(J(0))$. This proves that $\dim_\K(\K\{x\}/Q(0))=\mu_0$ (hence $\psi$ is a finite mapping) and, consequently, the theorem follows directly from Proposition~\ref{prop:flat-fin-det-finite} and Remark~\ref{rem:Milnor-will-suffice}.

First, let us show that $\NN(Q(0))$ contains all the vertices of $\NN(J(0))$. Given $\beta'$ a vertex of $\NN(J(0))$, let $F\in J$ be such that $\inexp(F(0))=\beta'$. Write $F=\sum_{i=1}^sg_i\cdot h_i+\sum_{j=1}^nq_j\cdot(y_j-\vp_j)$, for some $g_i,q_j\in\K\{y,x\}$. Define $G=\sum_{i=1}^sg_i\cdot h_i+\sum_{j=1}^nq_j\cdot(y_j-\psi_j)$. Then $G\in Q$ and, by assumption, $j^\mu G=j^\mu F$.
In particular, $\inexp(G(0))=\inexp(F(0))=\beta'$, as $|\beta'|\leq\mu$. It follows that $\beta'\in\NN(Q(0))$.

Now, choose $\beta''\in\N^m\setminus\NN(J(0))$ and suppose that $\beta''\in\NN(Q(0))$. Pick $G\in Q$ such that $\inexp(G(0))=\beta''$, and write $G=\sum_{i=1}^sg_i\cdot h_i+\sum_{j=1}^nq_j\cdot(y_j-\psi_j)$, for some $g_i,q_j\in\K\{y,x\}$. Define $F=\sum_{i=1}^sg_i\cdot h_i+\sum_{j=1}^nq_j\cdot(y_j-\vp_j)$. Then $F\in J$, and $j^\mu F=j^\mu G$, by assumption. Since $|\beta''|\leq\mu$, we get $\inexp(F(0))=\inexp(G(0))=\beta''$, which contradicts the choice of $\beta''$.
\end{proof}

\begin{remark}
\label{rem:not-for-modules}
Note that Theorem~\ref{thm:flat-fin-det-finite} cannot be generalized to finitely generated modules.
More precisely, suppose that a finite $\K\{y\}$-module $M$ is given as the cokernel of a homomorphism $\Phi:\K\{y\}^q\to\K\{y\}^p$; i.e., $M=\K\{y\}^p/(\Phi(e_1),\dots,\Phi(e_q))$, where $\{e_1,\dots,e_q\}$ is the canonical basis of the free $\K\{y\}$-module $\K\{y\}^q$.
For $\mu\in\N$, let $M_\mu$ denote the module $\K\{y\}^p/(j^\mu\Phi(e_1),\dots,j^\mu\Phi(e_q))$.
One might expect that flatness of $M$ over $\K\{y\}$ is equivalent to flatness of $M_\mu$ for sufficiently large $\mu$. This is not the case however, as the following example shows.
\end{remark}

\begin{example}
\label{ex:not-for-modules}
Consider $f_1$ and $f_2$ from Example~\ref{ex:no-equality}, and let $M$ be a $\K\{x,y\}$-submodule of $\K\{x,y\}^3$ generated by
\[
G_1=(1,0,f_1), \ G_2=(1,1,f_2), \mathrm{\ and\ \,} G_3=(y^2-x,-x,0).
\]
Let $\OO$ denote the structure sheaf of $\K^2$ and let $\MM$ be a coherent $\OO$-module whose stalk at the origin is $\MM_{(0,0)}=\K\{x,y\}^3/M$. We have $G_1(0,0)=(1,0,0)$, $G_2(0,0)=(1,1,0)$, and $G_3(0,0)=(0,0,0)$, hence the multiplicity of $\MM$ at the origin is $3-2=1$. On the other hand, $G_3$ being a combination of $G_1$ and $G_2$ (indeed, $G_3=y^2\cdot G_1-x\cdot G_2$), the multiplicity of $\MM$ at any other point cannot be less than $1$. Therefore, multiplicity of $\MM$ is constant, and so $\MM$ is flat at $(0,0)$ (see, e.g., \cite[Thm.\,1.78]{GLS}).

Let now $\MM^\mu$ be a coherent $\OO$-module with $\MM^\mu_{(0,0)}=\K\{x,y\}^3/M_\mu$, where $M_\mu$ is generated by $j^\mu G_1$, $j^\mu G_2$, and $j^\mu G_3$. We claim that $\MM^\mu$ is not flat at the origin (equivalently, $\K\{x,y\}^3/M_\mu$ is not $\K\{x,y\}$-flat), for any $\mu\geq5$.

Indeed, identifying $M_\mu$ with a matrix with rows $j^\mu G_i$, $i=1,2,3$, we get
\[
\det(M_\mu)=y^2\cdot j^\mu f_1-x\cdot j^\mu f_2\,,
\]
which we know is equal to $xy^{\mu+1}$, provided $\mu\geq5$ (cf. Example~\ref{ex:no-equality}).
Therefore, for any $a\neq0$ and $b\neq0$, the stalk $\MM^\mu_{(a,b)}$ has multiplicity zero, while $\MM^\mu_{(0,0)}$ has multiplicity one.
\end{example}
\medskip

%%%%%%%%%%%%%%%%%%%%%%%%%%%%%%%%%%%%%%%%%%%%%%%%%%
\subsection{Mappings from complete intersections}

\subsubsection*{Proof of Theorem~\ref{thm:flat-fin-det-comp-int}}
Let $h_1,\dots,h_s$ be a regular sequence in $\K\{x\}$ for which $\OO_{X,0}=\K\{x\}/(h_1,\dots,h_s)$.
Identifying $X$ with the graph of $\vp$, we can write $\OO_{X,0}=\K\{y,x\}/J$, where
\[
J=(h_1(x),\dots,h_s(x),y_1-\vp_1(x),\dots,y_n-\vp_n(x))\cdot\K\{y,x\}\,.
\]
For $\mu\in\N$, set
\[
J_\mu\coloneqq(h_1(x),\dots,h_s(x),y_1-j^\mu\vp_1(x),\dots,y_n-j^\mu\vp_n(x))\cdot\K\{y,x\}.
\]
By a well-known flatness criterion over regular local rings (see, e.g., \cite[Thm.\,B.8.11]{GLS}) and because the local ring $\OO_{X,0}$ is Cohen-Macaulay, flatness of $\vp$ at zero is equivalent to
\begin{equation}
\label{eq:C-M}
\dim(\K\{x\}/J(0))=\dim\OO_{X,0}-n=(m-s)-n.
\end{equation}
Suppose then that $\vp$ is flat at zero. 
By \cite[Ch.\,2, Prop.\,5.3]{Tou}, there exists $\mu'\geq1$ such that $\dim(\K\{x\}/J_\mu(0))\,\leq\,\dim(\K\{x\}/J(0))$, for all $\mu\geq\mu'$. On the other hand, as an ideal generated by $s+n$ elements, every $J_\mu(0)$ satisfies inequality $\dim(\K\{x\}/J_\mu(0))\geq m-(s+n)$.
Therefore, by \eqref{eq:C-M}, $\dim(\K\{x\}/J_\mu(0))=(m-s)-n$, and so $\vp_\mu$ is flat at zero. This proves (i)\,$\Rightarrow$\,(ii).

The implication (ii)\,$\Rightarrow$\,(iii) is trivial.
For the proof of (iii)\,$\Rightarrow$\,(i), suppose that $\vp$ is not flat at the origin. Then, by Proposition~\ref{prop:Hir2}, one can choose a nonzero $F\in J$ supported outside of $\NN(J(0))$. In particular, $\inexp(F)\notin\NN(J(0))$. Write
 $F=\sum_{i=1}^sg_ih_i+\sum_{k=1}^nq_k\cdot(y_k-\vp_k)$, for some $g_i,q_k\in\K\{y,x\}$, and set $\mu''\coloneqq\max\{|\inexp(F)|,l_0\}$, where $l_0$ is the maximum of lengths of vertices of the diagram $\NN(J(0))$. Fix $\mu\geq\mu''$, and define $G\coloneqq\sum_{i=1}^sg_ih_i+\sum_{k=1}^nq_k\cdot(y_k-j^\mu\vp_k)$. Then $G\in J_\mu$ and $\inexp(G)=\inexp(F)$ (as $j^\mu G=j^\mu F$ and by Remark~\ref{rem:jet-exp}). By Lemma~\ref{lem:diagram-up-to-l}, $\inexp(G)\notin\NN(J_\mu(0))$, which implies that $\vp_\mu$ is not flat (by Propositions~\ref{prop:Hir2} and~\ref{prop:Hir1}). The theorem thus holds for $\mu_0=\max\{\mu',\mu''\}$.
\qed

\begin{corollary}
\label{cor:1fin-det-comp-int}
Given a collection of power series $f_1,\dots,f_s\in\K\{x\}$, there exists $\mu_0\geq1$ such that the following conditions are equivalent:
\begin{itemize}
\item[(i)] $f_1,\dots,f_s$ form a regular sequence.
\item[(ii)] For all $\mu\geq\mu_0$, $j^\mu f_1,\dots,j^\mu f_s$ form a regular sequence.
\item[(iii)] There exists $\mu\geq\mu_0$ such that $j^\mu f_1,\dots,j^\mu f_s$ form a regular sequence.
\end{itemize}
\end{corollary}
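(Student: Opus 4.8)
The plan is to deduce the corollary from Theorem~\ref{thm:flat-fin-det-comp-int}, applied with source the full ambient space $\K^m$, by means of the dictionary: a tuple $(g_1,\dots,g_s)$ with $g_i\in\mm_x$ forms a $\K\{x\}$-regular sequence if and only if the coordinate mapping $(g_1,\dots,g_s)\colon\K^m\to\K^s$ is flat at $0$. I would first dispose of the degenerate case. If some $f_i$ is a unit in $\K\{x\}$, i.e.\ $f_i(0)\neq0$, then $(f_1,\dots,f_s)$ is not a proper ideal, so (i) fails; since $j^\mu f_i$ has the same nonzero constant term $f_i(0)$ for every $\mu$, the ideal $(j^\mu f_1,\dots,j^\mu f_s)$ is never proper either, so (ii) and (iii) fail as well, and all three conditions are false --- any $\mu_0$ works. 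Hence we may assume $f_1,\dots,f_s\in\mm_x$, so that $\vp\coloneqq(f_1,\dots,f_s)\colon\K^m\to\K^s$ satisfies $\vp(0)=0$; likewise $j^\mu f_i\in\mm_x$ for all $\mu$, so each $j^\mu\vp\coloneqq(j^\mu f_1,\dots,j^\mu f_s)\colon\K^m\to\K^s$ satisfies $(j^\mu\vp)(0)=0$.

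Next I would verify the dictionary. The local ring $\OO_{\K^m,0}=\K\{x\}$ is regular --- in particular a complete intersection and Cohen--Macaulay of dimension $m$ --- so Theorem~\ref{thm:flat-fin-det-comp-int} does apply with $X=\K^m$. For any $s$-tuple $g=(g_1,\dots,g_s)$ of elements of $\mm_x$, the evaluated ideal at $y=0$ of the graph of $g\colon\K^m\to\K^s$ equals $(g_1,\dots,g_s)\cdot\K\{x\}$, so the flatness criterion over regular local rings together with the Cohen--Macaulayness of $\K\{x\}$ --- exactly as in the derivation of \eqref{eq:C-M}, now with $\OO_{X,0}=\K\{x\}$ and target $\K^s$ --- gives: $g\colon\K^m\to\K^s$ is flat at $0$ if and only if $\dim(\K\{x\}/(g_1,\dots,g_s))=m-s$. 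On the other hand, $g_1,\dots,g_s\in\mm_x$ form a $\K\{x\}$-regular sequence if and only if $\dim(\K\{x\}/(g_1,\dots,g_s))=m-s$: the forward implication is recorded in the discussion at the end of Section~\ref{sec:approx}, and conversely $\dim(\K\{x\}/(g_1,\dots,g_s))=m-s$ forces the ideal $(g_1,\dots,g_s)$ to have height $s$ in the Cohen--Macaulay ring $\K\{x\}$, whence its $s$ generators form a regular sequence by a standard criterion. Combining the two equivalences proves the dictionary.

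Finally I would apply Theorem~\ref{thm:flat-fin-det-comp-int} to $X=\K^m$ and $\vp=(f_1,\dots,f_s)$, obtaining $\mu_0\geq1$ for which its conditions (i)--(iii) are equivalent, and then substitute the dictionary into each of these conditions --- once for $g=(f_1,\dots,f_s)$ and, for each $\mu\geq\mu_0$, once for $g=(j^\mu f_1,\dots,j^\mu f_s)$. This transforms them verbatim into conditions (i)--(iii) of the corollary, which therefore holds with the same $\mu_0$. I do not anticipate a genuine obstacle: the only mildly delicate points are the degenerate unit case and the identification of the evaluated ideal of a graph, both routine; the substantive content is already carried by Theorem~\ref{thm:flat-fin-det-comp-int} and the Cohen--Macaulayness of $\K\{x\}$, which together express that a tuple of power series is a regular sequence precisely when the germ it cuts out has the expected dimension, and that this dimension depends only on sufficiently high jets.
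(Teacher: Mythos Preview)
Your proposal is correct and follows essentially the same route as the paper: apply Theorem~\ref{thm:flat-fin-det-comp-int} with $X=\K^m$ and $\vp=(f_1,\dots,f_s)$, and use the equivalence between ``$g_1,\dots,g_s$ is a regular sequence in $\K\{x\}$'' and ``$(g_1,\dots,g_s)\colon\K^m\to\K^s$ is flat at $0$'' (which the paper simply cites from \cite[Thm.\,B.8.11]{GLS}). Your version is in fact slightly more careful, explicitly disposing of the degenerate case where some $f_i$ is a unit so that the hypothesis $\vp(0)=0$ of Theorem~\ref{thm:flat-fin-det-comp-int} is met.
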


\begin{proof}
The equivalence follows immediately from Theorem~\ref{thm:flat-fin-det-comp-int} applied to the mapping $\vp\coloneqq(f_1,\dots,f_s):\K^m\to\K^s$, and the fact that $h_1,\dots,h_k\in\K\{x\}$ form a regular sequence if and only if $(h_1,\dots,h_k):\K^m\to\K^k$ is flat (see, e.g., \cite[Thm.\,B.8.11]{GLS}).
\end{proof}

For the next result, we will need the following useful observation.

\begin{remark}
\label{rem:diagram-dim}
For an ideal $I$ in $\K\{x\}$, the following conditions are equivalent, up to a generic linear change of coordinates $x$:
\begin{itemize}
\item[(i)] $\dim(\K\{x\}/I)\leq\dim\K\{x\}-k$.
\item[(ii)] The diagram $\NN(I)$ has a vertex on each of the axes corresponding to $x_1,\dots,x_k$.
\end{itemize}
Indeed, condition (ii) clearly implies (i). On the other hand, (i) implies that (up to a generic linear change of coordinates) $\K\{x\}/I$ is a finite $\K\{\tilde{x}\}$-module, where $\tilde{x}=(x_{k+1},\dots,x_m)$. The latter is equivalent to saying that $I$ contains a distinguished pseudo-polynomial $f_j\in\K\{\tilde{x}\}[x_j]$, for every $j=1,\dots,k$ (see, e.g., \cite[Ch.\,III, Sec.\,2.2]{Loj}), hence (ii).
\end{remark}

\begin{corollary}
\label{cor:fin-det-comp-int}
Given a collection of power series $f_1,\dots,f_s\in\K\{x\}$, there exists $\mu_0\geq1$ such that the following conditions are equivalent:
\begin{itemize}
\item[(i)] $f_1,\dots,f_s$ form a regular sequence.
\item[(ii)] For every $\mu\geq\mu_0$, every sequence $g_1,\dots,g_s$ satisfying $j^\mu g_i=j^\mu f_i$ ($i=1,\dots,s$) is regular in $\K\{x\}$.
\item[(iii)] There exists $\mu\geq\mu_0$ such that every sequence $g_1,\dots,g_s$ satisfying $j^\mu g_i=j^\mu f_i$ ($i=1,\dots,s$) is regular in $\K\{x\}$.
\end{itemize}
\end{corollary}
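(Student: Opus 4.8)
The plan is to deduce Corollary~\ref{cor:fin-det-comp-int} from Corollary~\ref{cor:1fin-det-comp-int} together with the dimension-stability argument already recorded after Conjecture~\ref{conj:diagram-fin-det}. The subtle point is that Corollary~\ref{cor:1fin-det-comp-int} only compares $f_1,\dots,f_s$ with the \emph{particular} perturbations $j^\mu f_1,\dots,j^\mu f_s$, whereas here we must control \emph{all} sequences $g_1,\dots,g_s$ with $j^\mu g_i=j^\mu f_i$. So the heart of the matter is a uniformity statement: the threshold $\mu_0$ provided for $(f_1,\dots,f_s)$ should simultaneously work for every such $g$.

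First I would fix, via Corollary~\ref{cor:1fin-det-comp-int}, an integer $\mu_1\geq 1$ such that $(f_1,\dots,f_s)$ is regular iff $(j^\mu f_1,\dots,j^\mu f_s)$ is regular for all $\mu\geq\mu_1$ (equivalently for some such $\mu$). Next, using \cite[Ch.\,2, Prop.\,5.3]{Tou} applied to the finite jet $(j^{\mu_1}f_1,\dots,j^{\mu_1}f_s)$ — which is a \emph{polynomial} tuple, hence all of whose perturbations keeping the $\mu_1$-jet fixed are again perturbations of this fixed polynomial tuple — I would extract $\mu_0\geq\mu_1$ with the property that $\dim(\K\{x\}/(g_1,\dots,g_s))\leq\dim(\K\{x\}/(j^{\mu_1}f_1,\dots,j^{\mu_1}f_s))$ for every $g_i$ with $j^{\mu_0}g_i=j^{\mu_1}f_i$, in particular for every $g_i$ with $j^{\mu_0}g_i=j^{\mu_0}f_i$. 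I claim the corollary holds for this $\mu_0$. For $(i)\Rightarrow(ii)$: if $(f_1,\dots,f_s)$ is regular then $\dim\K\{x\}/(f_1,\dots,f_s)=m-s$, so $\dim\K\{x\}/(j^{\mu_1}f_1,\dots,j^{\mu_1}f_s)=m-s$ as well (by the same choice of $\mu_1$, via Corollary~\ref{cor:1fin-det-comp-int}), whence $\dim\K\{x\}/(g_1,\dots,g_s)\leq m-s$ for all admissible $g$; but an ideal generated by $s$ elements in the $m$-dimensional regular ring $\K\{x\}$ always has quotient of dimension $\geq m-s$, so equality holds, and an ideal generated by $s$ elements whose quotient has dimension $m-s$ is generated by a regular sequence (Cohen--Macaulayness of $\K\{x\}$; see \cite[Thm.\,B.8.11]{GLS}). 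Then $(ii)\Rightarrow(iii)$ is trivial, and $(iii)\Rightarrow(i)$ follows since the tuple $g_i=j^{\mu}f_i$ (for the $\mu$ witnessing $(iii)$) is itself an admissible perturbation, so $(j^\mu f_1,\dots,j^\mu f_s)$ is regular, and then Corollary~\ref{cor:1fin-det-comp-int} gives that $(f_1,\dots,f_s)$ is regular.

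The main obstacle, and the step that needs care, is the uniform application of \cite[Ch.\,2, Prop.\,5.3]{Tou}: that proposition bounds the dimension of the quotient by a tuple in terms of a sufficiently high jet of \emph{that very tuple}, so to get a threshold that is uniform over all perturbations $g$ of $f$ one must anchor it at a jet that is already a polynomial (here $j^{\mu_1}f$), and then observe that every $g$ with $j^{\mu_0}g_i=j^{\mu_0}f_i$ (with $\mu_0\geq\mu_1$) is automatically a perturbation of $j^{\mu_1}f$ whose $\mu_0$-jet is fixed. Once this is set up the rest is the by-now-routine Cohen--Macaulay dimension count. An alternative, perhaps cleaner, route would be to phrase everything through the diagram $\NN$: reduce to showing $\NN(J(0))=\NN(Q(0))$ for the graph ideals of $\vp=(f_1,\dots,f_s)$ and $\psi=(g_1,\dots,g_s)$ as in the proof of Theorem~\ref{thm:flat-fin-det-finite}, but that proof used finiteness of $\vp$ essentially; for a general regular sequence one does not have $J(0)\supset\mm_x^k$, so the dimension argument above is the more robust option and is the one I would write up.
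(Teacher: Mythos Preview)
Your argument for $(iii)\Rightarrow(i)$ matches the paper's. The gap is in $(i)\Rightarrow(ii)$, specifically the ``in particular'' step. You apply Tougeron's proposition to the polynomial tuple $h_i\coloneqq j^{\mu_1}f_i$ and extract $\mu_0\geq\mu_1$ such that the dimension bound holds for every $g$ with $j^{\mu_0}g_i=j^{\mu_0}h_i$. Since $\deg h_i\leq\mu_1\leq\mu_0$, the condition $j^{\mu_0}g_i=j^{\mu_0}h_i$ reads $j^{\mu_0}g_i=j^{\mu_1}f_i$. But the perturbations you actually need to control satisfy $j^{\mu_0}g_i=j^{\mu_0}f_i$, and these two conditions are \emph{not} comparable: whenever $f_i$ has a nonzero term of degree between $\mu_1+1$ and $\mu_0$, a $g_i$ with $j^{\mu_0}g_i=j^{\mu_0}f_i$ will have $j^{\mu_0}g_i\neq j^{\mu_1}f_i$. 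So the ``in particular'' is false and the dimension bound does not transfer. The detour through the polynomial anchor $j^{\mu_1}f$, meant to buy uniformity, in fact destroys it: you end up controlling only those $g$ that agree with $j^{\mu_1}f$ (not with $f$) to order $\mu_0$.

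If Tougeron's Prop.~5.3 genuinely bounds $\dim\K\{x\}/(g)$ for \emph{all} $g$ sharing a sufficiently high jet with a fixed tuple (not merely for the jets $j^\mu f$ themselves), then you should apply it directly to $(f_1,\dots,f_s)$ and skip the anchor; that would give $(i)\Rightarrow(ii)$ in one line. But the paper only invokes Tougeron for the specific truncations $I_\mu$, and for the present corollary it does \emph{not} cite Tougeron at all for $(i)\Rightarrow(ii)$. Instead it argues via Remark~\ref{rem:diagram-dim}: after a generic linear change of coordinates, regularity of $(f_1,\dots,f_s)$ forces $\NN(I)$ to have a vertex $\beta^k$ on each of the first $s$ coordinate axes; taking $\mu'\coloneqq\max_k|\beta^k|$, one writes each representative $F_k=\sum_i q_{ki}f_i$, replaces $f_i$ by $g_i$ to get $G_k\in J=(g_1,\dots,g_s)$, and uses $j^\mu G_k=j^\mu F_k$ (for $\mu\geq\mu'$) together with Remark~\ref{rem:jet-exp} to conclude $\beta^k\in\NN(J)$. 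Then Remark~\ref{rem:diagram-dim} gives $\dim\K\{x\}/J\leq m-s$, hence regularity of $(g_1,\dots,g_s)$. This is self-contained within the paper's diagram machinery and sidesteps the uniformity issue you flagged.
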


\begin{proof}
Suppose that $f_1,\dots,f_s$ form a regular sequence in $\K\{x\}$. Then $\dim\K\{x\}/$ $(f_1,\dots,f_s)=m-s$.
By Remark~\ref{rem:diagram-dim}, one can assume that the diagram $\NN(I)$ of the ideal $I=(f_1,\dots,f_s)\cdot\K\{x\}$ has a vertex on each of the axes corresponding to $x_1,\dots,x_s$; say, $\beta^1,\dots,\beta^s$. Let $\mu'$ denote the maximum of lengths of $\beta^1,\dots,\beta^s$. Fix $\mu\geq\mu'$ and choose arbitrary $g_1,\dots,g_s\in\K\{x\}$ with $j^\mu g_i=j^\mu f_i$ for $i=1,\dots,s$. Set $J\coloneqq(g_1,\dots,g_s)\cdot\K\{x\}$. For $k=1,\dots,s$, let $F_k\in I$ be such that $\inexp(F_k)=\beta^k$. Write $F_k=\sum_{i=1}^sq_{ki}f_i$, for some $q_{ki}\in\K\{x\}$. Define $G_k\coloneqq\sum_{i=1}^sq_{ki}g_i$, $k=1,\dots,s$. Then $G_k\in J$ and $j^\mu G_k=j^\mu F_k$ for all $k$. As $|\beta^k|\leq\mu'\leq\mu$, it follows (by Remark~\ref{rem:jet-exp}) that $\inexp(G_k)=\inexp(F_k)=\beta^k$, and so $\beta^1,\dots,\beta^s$ all belong to $\NN(J)$. Hence, by Remark~\ref{rem:diagram-dim} again, $\dim\K\{x\}/J\leq m-s$. Thus, $g_1,\dots,g_s$ form a regular sequence, which proves (i)\,$\Rightarrow$\,(ii).

The implication (ii)\,$\Rightarrow$\,(iii) is trivial.
For the proof of (iii)\,$\Rightarrow$\,(i), suppose that $f_1,\dots,f_s$ do not form a regular sequence. Then, by Corollary~\ref{cor:1fin-det-comp-int}, there exists $\mu''$ such that, for all $\mu\geq\mu''$, $j^\mu f_1,\dots,j^\mu f_s$ do not form a regular sequence. The corollary thus holds for $\mu_0\coloneqq\max\{\mu',\mu''\}$.
\end{proof}

It is now easy to see that flatness of mappings from complete intersections is finitely determined, in the following sense.

\begin{theorem}
\label{thm:2flat-fin-det-comp-int}
Let $X$ be a $\K$-analytic subspace of $\K^m$. Suppose that $0\in X$ and the local ring $\OO_{X,0}$ is a complete intersection.
Let $\vp=(\vp_1,\dots,\vp_n):X\to\K^n$, $\vp(0)=0$, be a $\K$-analytic mapping. Then, there exists $\mu_0\geq1$ such that the following conditions are equivalent:
\begin{itemize}
\item[(i)] $\vp$ is flat at zero.
\item[(ii)] For every $\mu\geq\mu_0$, every analytic mapping $\psi=(\psi_1,\dots,\psi_n):X\to\K^n$ with $j^\mu\psi_i=j^\mu\vp_i$ ($i=1,\dots,n$) is flat at zero.
\item[(iii)] There exists $\mu\geq\mu_0$ such that every analytic mapping $\psi=(\psi_1,\dots,\psi_n):X\to\K^n$ with $j^\mu\psi_i=j^\mu\vp_i$ ($i=1,\dots,n$) is flat at zero.
\end{itemize}
If, moreover, $X$ is defined at $0\in\K^m$ by a regular sequence $h_1,\dots,h_s$ and, for $\mu\in\N$, $X_\mu$ denotes a local model defined at $0\in\K^m$ by $\OO_{X_\mu,0}=\K\{x\}/(j^\mu h_1,\dots,j^\mu h_s)$, then the above conditions are equivalent to the following:
\begin{itemize}
\item[(ii')] For every $\mu\geq\mu_0$, every analytic mapping $\psi=(\psi_1,\dots,\psi_n):X_\mu\to\K^n$ with $j^\mu\psi_i=j^\mu\vp_i$ ($i=1,\dots,n$) is flat at zero.
\item[(iii')] There exists $\mu\geq\mu_0$ such that every analytic mapping $\psi=(\psi_1,\dots,\psi_n):X_\mu\to\K^n$ with $j^\mu\psi_i=j^\mu\vp_i$ ($i=1,\dots,n$) is flat at zero.
\end{itemize}
\end{theorem}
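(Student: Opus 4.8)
The plan is to deduce this theorem from the already-proved Theorem~\ref{thm:flat-fin-det-comp-int} together with Corollary~\ref{cor:fin-det-comp-int}, in exact parallel to how Theorem~\ref{thm:flat-fin-det-finite} was deduced from Proposition~\ref{prop:flat-fin-det-finite}. The key observation is that perturbing $\vp$ to a mapping $\psi$ with the same $\mu$-jet does \emph{not} change the source $X$ (whose defining equations $h_1,\dots,h_s$ are untouched), only the mapping; so the real content is the combination of two finite-determinacy statements, one for the regular sequence $h_1,\dots,h_s$ defining $X$ and one for flatness of the resulting mapping germ. Accordingly, first I would let $\mu_0'$ be the constant furnished by Theorem~\ref{thm:flat-fin-det-comp-int} applied to $\vp:X\to\K^n$, and let $\mu_0''$ be the constant furnished by Corollary~\ref{cor:fin-det-comp-int} applied to the sequence $h_1,\dots,h_s$ (this is only needed for the primed statements, where the source itself is perturbed to $X_\mu$). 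Set $\mu_0=\max\{\mu_0',\mu_0''\}$.

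For the unprimed equivalences, fix $\mu\geq\mu_0$ and an arbitrary $\psi=(\psi_1,\dots,\psi_n):X\to\K^n$ with $j^\mu\psi_i=j^\mu\vp_i$. Writing $\OO_{X,0}=\K\{x\}/(h_1,\dots,h_s)$ and identifying $X$ with the graph, flatness of $\psi$ at zero is encoded by the diagram $\NN(Q(0))$, where $Q=(h_1,\dots,h_s,y_1-\psi_1,\dots,y_n-\psi_n)\cdot\K\{y,x\}$ and the evaluation is at $y=0$; similarly flatness of $j^\mu\vp$ is encoded by $\NN(J_\mu(0))$ with $J_\mu=(h_1,\dots,h_s,y_1-j^\mu\vp_1,\dots,y_n-j^\mu\vp_n)\cdot\K\{y,x\}$. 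Since $j^\mu\psi_i=j^\mu\vp_i=j^\mu(j^\mu\vp_i)$, the standard argument of Lemma~\ref{lem:diagram-up-to-l} and the proof of Theorem~\ref{thm:flat-fin-det-finite} (lift any $F\in Q$ to $G\in J_\mu$ with $j^\mu G=j^\mu F$ and conversely, using Remark~\ref{rem:jet-exp} once $\mu$ exceeds the lengths of the relevant initial exponents) shows $\NN(Q(0))$ and $\NN(J_\mu(0))$ agree below degree $\mu$; as $\mu\geq\mu_0'\geq l_0$, where $l_0$ is the max length of a vertex of $\NN(J(0))$, one gets $\NN(Q(0))=\NN(J_\mu(0))$ outright by Corollary~\ref{cor:2diagram}-type reasoning (the finiteness/Cohen--Macaulay considerations in the proof of Theorem~\ref{thm:flat-fin-det-comp-int} guarantee $J(0)$ contains a power of $\mm_x$ once we restrict to the fibre). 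Hence $\psi$ is flat at zero if and only if $j^\mu\vp$ is, and Theorem~\ref{thm:flat-fin-det-comp-int} closes the loop: (i)$\Rightarrow$(ii) because (i) gives flatness of $j^\mu\vp$ for all $\mu\geq\mu_0$, hence of every such $\psi$; (ii)$\Rightarrow$(iii) is trivial; (iii)$\Rightarrow$(i) because taking $\psi=j^\mu\vp$ itself shows $j^\mu\vp$ is flat, whence $\vp$ is by Theorem~\ref{thm:flat-fin-det-comp-int}(iii).

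For the primed statements the only extra point is that the source is also perturbed: $X_\mu$ is defined by $j^\mu h_1,\dots,j^\mu h_s$. By Corollary~\ref{cor:fin-det-comp-int}, for $\mu\geq\mu_0''$ the sequence $j^\mu h_1,\dots,j^\mu h_s$ is again regular, so $\OO_{X_\mu,0}$ is again a complete intersection of the same dimension $m-s$, and Theorem~\ref{thm:flat-fin-det-comp-int} applies verbatim to $X_\mu$ in place of $X$. One then repeats the diagram comparison with $J_\mu$ replaced by the ideal $(j^\mu h_1,\dots,j^\mu h_s,y_1-j^\mu\vp_1,\dots,y_n-j^\mu\vp_n)$, exactly as in the passage from $\tilde J_\mu$ to $J_\mu$ in the proof of Proposition~\ref{prop:flat-fin-det-finite}, and concludes as before. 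The main obstacle — really the only place any care is needed — is the uniformity of the constant: one must check that a single $\mu_0$ works simultaneously for the dimension-semicontinuity input from \cite[Ch.\,2, Prop.\,5.3]{Tou} (governing flatness of $j^\mu\vp$), for the vertex-length bound controlling the diagram comparison, and for the regularity of $j^\mu h_i$; taking the maximum of the finitely many constants supplied by Theorem~\ref{thm:flat-fin-det-comp-int} and Corollary~\ref{cor:fin-det-comp-int} handles this, since each of those results already packages its own such maximum.
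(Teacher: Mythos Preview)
Your approach has a genuine gap. You claim that ``the finiteness/Cohen--Macaulay considerations in the proof of Theorem~\ref{thm:flat-fin-det-comp-int} guarantee $J(0)$ contains a power of $\mm_x$ once we restrict to the fibre,'' but this is false in general. Here $J(0)=(h_1,\dots,h_s,\vp_1,\dots,\vp_n)\cdot\K\{x\}$ defines the fibre $\vp^{-1}(0)$, which when $\vp$ is flat has dimension $m-s-n$ and is typically positive-dimensional. The inclusion $J(0)\supset\mm_x^k$ is precisely the finiteness hypothesis of Theorem~\ref{thm:flat-fin-det-finite}, not something available here; nothing in the proof of Theorem~\ref{thm:flat-fin-det-comp-int} asserts it. Consequently your ``Corollary~\ref{cor:2diagram}-type reasoning'' does not apply, and you cannot upgrade agreement of $\NN(Q(0))$ and $\NN(J_\mu(0))$ in degrees $\leq\mu$ to full equality. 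Without that, the Hironaka criterion (Proposition~\ref{prop:Hir2}) cannot be invoked as you wish: a witness $F$ to non-flatness may have $|\inexp(F)|>\mu$, and then the jet comparison tells you nothing about the corresponding element of the perturbed ideal.

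The paper's argument is far shorter and sidesteps this entirely. It uses the fact (\cite[Thm.\,B.8.11]{GLS}) that $\vp:X\to\K^n$ is flat at $0$ if and only if $\vp_1,\dots,\vp_n$ form an $\OO_{X,0}$-regular sequence, which, since $\OO_{X,0}=\K\{x\}/(h_1,\dots,h_s)$ with $h_1,\dots,h_s$ already regular, is equivalent to $h_1,\dots,h_s,\vp_1,\dots,\vp_n$ being a regular sequence in $\K\{x\}$. All five equivalences then follow from a \emph{single} application of Corollary~\ref{cor:fin-det-comp-int} to this combined $(s+n)$-tuple: perturbing $\vp$ to $\psi$ (and, for the primed statements, $h_i$ to $j^\mu h_i$) preserves the $\mu$-jets of all $s+n$ entries, so regularity is preserved or not. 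No separate diagram comparison, and no recourse to Theorem~\ref{thm:flat-fin-det-comp-int}, is needed.
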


\begin{proof}
Recall (\cite[Thm.\,B.8.11]{GLS}) that an analytic mapping $\vp=(\vp_1,\dots,\vp_n):X\to\K^n$ is flat at zero if and only if $\vp_1,\dots,\vp_n$ form an $\OO_{X,0}$-regular sequence. Therefore, if $X$ is defined at $0\in\K^m$ by a regular sequence $h_1,\dots,h_s$, then the latter is equivalent to saying that $h_1,\dots,h_s,\vp_1,\dots,\vp_n$ form a $\K\{x\}$-regular sequence.
The theorem thus follows from Corollary~\ref{cor:fin-det-comp-int} applied to the sequence $h_1,\dots,h_s,\vp_1,\dots,\vp_n$.
\end{proof}

\begin{question}
\label{q:not-complete}
It would be interesting to know if one could relax the complete intersection assumption on the domain $X$ in Theorem~\ref{thm:2flat-fin-det-comp-int}.
\end{question}

%%%%%%%%%%%%%%%%%%%%%%%%%%%%%%%%%%%%%%%%%%%%%%%%%%
%%%%%%%%%%%%%%%%%%%%%%%%%%%%%%%%%%%%%%%%%%%%%%%%%%
%%%%%%%%%%%%%%%%%%%%%%%%%%%%%%%%%%%%%%%%%%%%%%%%%%
\section{Stability under deformations}
\label{sec:stability}

Let $\vp:X\to Y$ be a morphism of $\K$-analytic spaces, and let $(T,0)$ be a pointed space (i.e., a $\K$-analytic space with a distinguished point $0$). By a \emph{deformation of $\vp$ over $Y$} we shall mean a Cartesian diagram of the form
\[
\begin{CD}
X    @>>>    \XX \\
@V{\vp}VV    @VV{\Phi}V \\
Y     @>>>   Y\times T \\
@VVV         @VV{\pi}V \\
\{0\} @>>>   T,
\end{CD}
\]
such that $\XX$ is flat over $T$ (see, e.g., \cite{GLS}). If $\XX$ is of the form $X\times T$ then $\Phi$ is called an \emph{unfolding} of $\vp$.

In this section, we study stability of flatness of $\vp$ under such deformations. Since our considerations are local, we can assume from the start that all spaces are local models; say, $\XX\subset\K^m$, $Y\subset\K^n$, $T\subset\K^k$, $X$ and $Y$ pass through the origins in $\K^m$ and $\K^n$ respectively, and $\vp(0)=0$. Let $x=(x_1,\dots,x_m)$, $y=(y_1,\dots,y_n)$, and $t=(t_1,\dots,t_k)$ denote the systems of variables in the respective ambient spaces of $\XX$, $Y$ and $T$.

For $\theta\in T$, we shall denote by $\Pt$ the specialization of $\Phi$ over $\theta$, that is, the unique mapping closing the following Cartesian diagram
\[
\tag{\dag}
\begin{CD}
(\pi\circ\Phi)^{-1}(\theta)  @>>>   \XX \\
@V{\Phi^\theta}VV            @VV{\Phi}V \\
Y           @>>>             Y\times T \\
@VVV                         @VV{\pi}V \\
\{\theta\}  @>>>             T.
\end{CD}
\]
\medskip

For the proof of Theorem~\ref{thm:stab-flat}, we shall first settle the case of smooth one-dimensional parameter space.

\begin{lemma}
\label{lem:stab-flat}
Let $\vp:X\to Y$ be a flat morphism of $\K$-analytic spaces, with $Y$ locally irreducible. If $\Phi$ is a deformation of $\vp$ parametrized by $T=\K$, then $\Pt$ is a flat mapping for every $\theta\in T$ near zero.
\end{lemma}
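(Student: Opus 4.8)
The plan is to reduce flatness of $\Pt$ to a statement about the behaviour of Hironaka diagrams under specialization, using the combinatorial criterion (Propositions~\ref{prop:Hir1} and~\ref{prop:Hir2}) as the main engine. Working locally near a point $\xi$ of the central fibre $X$, write $\OYeta=\K\{y\}/J$ and $\OO_{\XX,\xi}=\K\{y,x,t\}/\mathcal{I}$ so that $\Phi$ is, after identifying $\XX$ with a graph, presented by an ideal $\mathcal{I}$ in the $A\{x\}$-setup with $A=\OYeta\{t\}$ (base ring $\OYeta$, deformation variable $t$ adjoined). The hypothesis that $\XX$ is flat over $T=\K$ and that $\vp$ is flat over $Y$ should combine to say that $\OO_{\XX,\xi}$ is flat over $\OYeta\{t\}$, i.e., over $A$; this is where flatness of $\vp$ is used, together with the standard fact that flatness over $Y\times T$ plus flatness of $\XX$ over $T$ forces flatness over $Y$ of each fibre. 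The point $\theta\in\K$ near $0$ then corresponds to evaluating $t$ at $\theta$, and $\Pt$ flat at the corresponding point means $\OO_{\XX,\xi}/(t-\theta)$ is flat over $\OYeta$.

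The key steps, in order: (1) set up the presentation so that $\Phi$ flat over $Y$ means bijectivity of $\kappa$ for the ideal $\mathcal{I}$ over the base $A=\OYeta\{t\}$, equivalently (by Proposition~\ref{prop:Hir2}) the condition $\NN(\mathcal{I}(0))=\NN(\mathcal{I}^{(t)}(0))$ comparing the diagram of the generic ideal with that after a generic evaluation; (2) observe that the diagram of initial exponents of the specialized ideal $\mathcal{I}_\theta = \mathcal{I} + (t-\theta)$, evaluated at $\eta$, always contains the generic diagram $\NN(\mathcal{I}(0))$ — semicontinuity, since imposing $t=\theta$ can only enlarge the diagram; (3) show the reverse inclusion for $\theta$ near $0$: this is exactly where local irreducibility of $Y$ (and of $T=\K$, automatic here) enters. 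The idea is that flatness of $\vp$ at $\xi$ gives, via the criterion, that the fibre dimension / the complement of the diagram is locally constant along $Y$ over a point; a jump in $\NN(\mathcal{I}_\theta(0))$ at some $\theta\neq0$ would, by coherence, propagate to a non-flat locus which is analytic and nonempty, yet the flatness of $\XX$ over $\K$ forces this locus to be a union of fibres of $\pi\circ\Phi$, contradicting that it misses $\theta=0$ if it is all of a neighbourhood, or contradicting irreducibility if it is a proper subvariety meeting $\{t=0\}$ trivially. Concretely I would argue: the non-flatness locus of $\Phi^\theta$ as $\theta$ varies is closed and analytic in $T=\K$, hence either all of a neighbourhood of $0$ or a discrete set; flatness of $\vp=\Phi^0$ rules out the former, and irreducibility of the total space $\XX$ over the irreducible base rules out an isolated bad $\theta$ accumulating at $0$.

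The main obstacle I expect is step (3): turning "flatness at the central point" into "flatness at all nearby $\theta$" genuinely requires the irreducibility hypotheses and is not a soft diagram-stability statement — Example~\ref{ex:no-equality} warns that diagrams need not stabilize in families without extra hypotheses. The clean route is probably to avoid diagram bookkeeping in the family and instead use a dimension-theoretic / flatness-locus argument: the set of $\theta\in\K$ where $\Phi^\theta$ fails to be flat is a proper analytic subset (it is closed by semicontinuity of fibre dimension, Proposition~\ref{prop:Hir2} plus coherence of the relevant sheaves, and it is proper because it omits $0$), hence finite; then one uses flatness of $\XX\to T$ together with local irreducibility to exclude any such isolated bad parameter near $0$ — roughly, a bad $\theta_0$ would force the generic fibre dimension of $\pi\circ\Phi$ over $Y$ to exceed $\dim\OYeta$ along $\{t=\theta_0\}$ while being correct along $\{t=0\}$, contradicting flatness of $\XX$ over the irreducible curve $T$. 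Alternatively, and perhaps more in the spirit of this paper, one passes to the complex case where such loci are honest subvarieties, proves it there, and transfers by the approximation/algebraic-reduction arguments used elsewhere; but I would try the direct analytic argument first. I will then package the conclusion: for every $\theta$ near $0$, $\NN(\mathcal{I}_\theta(\eta))=\NN(\mathcal{I}(0))$, so $\kappa$ is bijective for $\mathcal{I}_\theta$, hence $\Pt$ is flat at $\xi$, and since $\xi$ was an arbitrary point of $X$ and flatness is an open condition, $\Pt$ is a flat mapping.
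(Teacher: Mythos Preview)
Your proposal sets up the right framework (graph presentation, Hironaka's criterion) but misses the central mechanism of the proof and, in one place, assumes what needs to be shown.

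In your first paragraph you write that ``the hypothesis that $\XX$ is flat over $T=\K$ and that $\vp$ is flat over $Y$ should combine to say that $\OO_{\XX,\xi}$ is flat over $\OYeta\{t\}$''. This is not a standard fact to be invoked --- it is precisely the content of the lemma. Once $\Phi$ is flat at $0$, openness of flatness plus base change immediately give flatness of every $\Pt$ near $\theta=0$; your steps (2)--(3) about specializing diagrams at each $\theta$ are then unnecessary. So the whole problem is to prove $\Phi$ flat at $0$, and you have identified this only as an assumption rather than the target.

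The paper's argument for this is short and concrete, and it is what your sketch lacks: suppose $\Phi$ is not flat at $0$. By Proposition~\ref{prop:Hir2} there is a nonzero $F\in I$ with $\supp(F)\subset\N^m\setminus\NN(I(0,0,x))$. Restricting to $t=0$ gives $F(y,0,x)\in I(y,0,x)$; since $\vp$ is flat, Proposition~\ref{prop:Hir2} forces $F(y,0,x)=0$, i.e.\ $t\mid F$. Factor out the maximal power $t^d$ and set $\tilde F=t^{-d}F$. Then $\tilde F(y,0,x)\neq0$ but $\supp(\tilde F)=\supp(F)$, so again by Proposition~\ref{prop:Hir2} $\tilde F\notin I$. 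Thus $t^d\cdot\tilde F\in I$ with $\tilde F\notin I$: $t$ is a zerodivisor in $\OO_{\XX,0}$, contradicting flatness of $\XX$ over $\K$. This $t$-divisibility / zerodivisor contradiction is the missing idea in your attempt.

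Your alternative route via the non-flat locus in $T$ is salvageable in outline (a proper analytic subset of $\K$ not containing $0$ is empty near $0$), but to run it you would first need to know that the non-flat locus of $\Phi$ in $\XX$ is saturated for $\pi\circ\Phi$, or else that flatness of $\Pt$ at points over $\theta$ is detected by an analytic condition in $\theta$ alone; neither is established in your sketch, and the appeal to ``irreducibility rules out an isolated bad $\theta$'' is not a valid step as written.
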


\begin{proof}
Since flatness is an open property (see, e.g., \cite[Thm.\,7.15]{BM}) and because flatness is preserved by base change (see, e.g., \cite[Prop.\,6.8]{H}), it suffices to show that $\Phi$ is flat at $0\in\XX$.
For a proof by contradiction, suppose that $\Phi$ is not flat at the origin.
Let $R$ denote the local ring $\OO_{Y,0}$. After identifying $\XX$ with the graph of $\Phi$, we can regard $\OO_{\XX,0}$ as a quotient of $\OO_{Y\times T,(0,0)}$; that is, $\OO_{\XX,0}=R\{t,x\}/I$ for some ideal $I=I(y,t,x)$ in $R\{t,x\}$.

Now, by Proposition~\ref{prop:Hir2}, there exists a nonzero $F\in I$ such that $\supp(F)\subset\N^m\setminus\NN(I(0,0,x))$, where the evaluation is at $(y,t)=(0,0)$. We have $F(y,0,x)\in I(y,0,x)$. Hence, flatness of $\vp$ at zero implies that $F(y,0,x)=0$ in $R\{x\}$, by Proposition~\ref{prop:Hir2} again. The latter means that $t$ divides $F$ in $R\{t,x\}$.
Let $d$ be the maximum power of $t$ which can be factored out from $F$ in $R\{t,x\}$, and set $\tilde{F}\coloneqq t^{-d}\cdot F$.
Then $\tilde{F}(y,0,x)$ is not zero anymore. But $\supp(\tilde{F})=\supp(F)$, so applying Proposition~\ref{prop:Hir2} once more, we get that $\tilde{F}\notin I$. Consequently, $t^d$ is a zerodivisor in the local ring $\OO_{\XX,0}$, which contradicts the flatness of $\pi\circ\Phi$.
\end{proof}

\subsubsection*{Proof of Theorem~\ref{thm:stab-flat}}
As in the proof of the above lemma, it suffices to show that $\Phi$ is flat at $0\in\XX$.
Suppose first that $T$ is smooth. The problem being local, we can thus assume that $T=\K^k$. We will prove by induction on $k$ that flatness of $\pi\circ\Phi:\XX\to Y\times\K^k\to\K^k$ and $\Phi^0=\vp:X\to Y$ at zero implies that $\Phi:\XX\to Y\times\K^k$ is flat at zero.

For $k=0$ there is nothing to prove, so suppose that $k\geq1$ and the statement holds for $k-1$. 
Consider the flat mapping $\XX_1\to Y\times\K^{k-1}\to\K^{k-1}$, defined as the pullback of the flat $\pi\circ\Phi$ by the inclusion $\K^{k-1}\hookrightarrow\K^k$.
The inductive hypothesis implies that $\Phi_1:\XX_1\to Y\times\K^{k-1}$ is flat.
Next, consider the mapping $\XX\to(Y\times\K^{k-1})\times\K\to\K$, which is flat as the composite of $\pi\circ\Phi$ with the projection $\K^k\to\K$. Applying Lemma~\ref{lem:stab-flat} to the Cartesian diagram
\[
\begin{CD}
\XX_1               @>>>   \XX \\
@V{\Phi_1}VV                              @VV{\Phi}V \\
Y\times\K^{k-1}     @>>>   (Y\times\K^{k-1})\times\K \\
@VVV                                       @VVV \\
\{0\}               @>>>   \K,
\end{CD}
\]
we conclude that $\Phi$ is flat at zero.
\smallskip

Finally, consider a general locally irreducible $T$. In this case, one can find a nonsingular $\K$-analytic space $Z$ and a dominant mapping $\sigma:Z\to T$, $\sigma(0)=0$, with $\dim Z_0=\dim T_0$ (for example, take a desingularization of $T$ near the origin). Consider the pullback of $\pi\circ\Phi:\XX\to Y\times T\to T$ by $\sigma:Z\to T$ (which is flat by the flatness of $\pi\circ\Phi$). One can easily check that this mapping factors as $\XX'\stackrel{\Phi'}{\rightarrow}Y\times Z\to Z$. By the first part of the proof, we thus get that $\Phi':\XX'\to Y\times Z$ is flat at zero. Moreover, the pullback of $\sigma$ by $\pi$ is clearly dominant, and so we get the following Cartesian square in which $\Phi'$ is flat and the bottom arrow is dominant:
\[
\begin{CD}
\XX           @<<<   \XX' \\
@V{\Phi}VV           @VV{\Phi'}V \\
Y\times T     @<<<   Y\times Z.
\end{CD}
\]
By assumption, $Y\times T$ is irreducible at $(0,0)$. Hence, the analytic flatness descent (see \cite[Prop.\,2.1]{AS1}) implies that $\Phi$ is flat at zero, as required.
\qed

%%%%%%%%%%%%%%%%%%%%%%%%%%%%%%%%%%%%%%%%%%%%%%%%%%
%%%%%%%%%%%%%%%%%%%%%%%%%%%%%%%%%%%%%%%%%%%%%%%%%%
%%%%%%%%%%%%%%%%%%%%%%%%%%%%%%%%%%%%%%%%%%%%%%%%%%
\section{Complex case}
\label{sec:cplx}

In this section, we consider the case $\K=\C$. In the complex setting, flatness of a mapping $\vp:X\to Y$ has a natural geometric interpretation. Namely, it is equivalent to continuity in the family of fibres of $\vp$. In fact, if $Y$ is nonsingular and (the local ring of) $X$ is Cohen-Macaulay (at every point), then flatness of $\vp$ is equivalent to openness (see, e.g., \cite[\S\,3.20]{F}), and the latter simply means that all fibres of $\vp$ are of the same dimension.
In particular, over $\K=\C$, in Theorems~\ref{thm:flat-fin-det-comp-int} and~\ref{thm:2flat-fin-det-comp-int} `flatness' can be replaced with `openness', since complete intersections are Cohen-Macaulay.

Over singular targets the picture is (considerably) more complicated, nonetheless it is still possible to interpret flatness in purely geometric terms. As we show in \cite{AS2}, a morphism $\vp:X\to Y$ of complex-analytic spaces (with $Y$ locally irreducible) is flat at a point $\xi\in X$ if and only if every irreducible component of the fibred product $X\times_YZ$ at $(\xi,\zeta)$ is dominant over $Z_\zeta$, where $\sigma_\zeta:Z_\zeta\to Y_\eta$ is the local blowing up of $Y$ at $\eta=\vp(\xi)$.

Below, we generalize this idea and construct test mappings to detect higher order discontinuities in the family of fibres of a given mapping. For an analytic mapping $\vp:X\to Y$ with locally irreducible $Y$ and $X$ of pure dimension, one can speak of the generic fibre dimension of $\vp$, denoted $\lambda_\vp$. Further, let $\kappa_\vp$ be the maximum fibre dimension of $\vp$. We shall call the difference $\kappa_\vp-\lambda_\vp$ the \emph{fibre defect} of $\vp$. By the Remmert Open Mapping Theorem (see, e.g., \cite[Ch.\,V, \S\,6, Thm.\,2]{Loj}), $\vp$ is open if and only if its fibre defect is zero.

%%%%%%%%%%%%%%%%%%%%%%%%%%%%%%%%%%%%%%%%%%%%%%%%%%
\subsection{Test mappings}

Consider a morphism $\vp:X\to Y$ of local models. Suppose that $X\subset\C^m$ is of pure dimension, $Y\subset\C^n$ is locally irreducible (of positive dimension), and $\vp(0)=0$. Suppose further that $n=\mathrm{edim}\OO_{Y,0}$. After a linear change of coordinates in $\C^n$ if needed, we can assume that $y_n$ belongs to the tangent cone of $Y$ at $0$. The following proposition gives a method of testing for the degree of fibre defect of a given mapping.

For $k\in\{0,\dots,n-1\}$, let $\sigma_k:\C^n\to\C^n$ denote the mapping
\[
(y_1,\dots,y_n)\mapsto(y_1,\dots,y_k,y_{k+1}y_n,\dots,y_{n-1}y_n,y_n)\,.
\]
In other words, $\sigma_k$ is the restriction to the affine chart $\{y_n\neq0\}$ of the blowing up of $\C^n$ with centre $C_k=\{y_{k+1}=\dots=y_n=0\}$. Denote by $Y_k^{st}$ the strict transform of $Y$ under $\sigma_k$.
We shall consider a Cartesian square of the form
\[
\begin{CD}
X\times_Y Y_k^{st}              @>>>    X \\
@V{\vp'_k}VV                    @VV{\vp}V \\
Y_k^{st}  @>{{\sigma_k}|_{Y_k^{st}}}>>  Y.
\end{CD}
\]

\begin{proposition}
\label{prop:fibre-defect-test}
Suppose that $\vp$ has fibre defect greater than $\delta$. Then:
\begin{itemize}
\item[(i)] The fibre $({\sigma_\delta}|_{Y_\delta^{st}}\circ\vp'_\delta)^{-1}(0)$ has at $(0,0)\in X\times_Y Y_\delta^{st}$ dimension greater than or equal to $\dim X$.
\item[(ii)] The fibred product $X\times_Y Y_\delta^{st}$ has an isolated irreducible component at $(0,0)$ which is mapped by ${\sigma_\delta}|_{Y_\delta^{st}}\circ\vp'_\delta$ into $C_\delta\cap Y$. Equivalently, $y_n$ is a zerodivisor in the reduced local ring $(\OO_{X\times_Y Y_\delta^{st},(0,0)})_{\mathrm{red}}$.
\end{itemize}
\end{proposition}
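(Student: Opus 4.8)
\textbf{Proof proposal for Proposition~\ref{prop:fibre-defect-test}.}

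The plan is to reduce the statement to dimension-theoretic bookkeeping on the strict transform $Y_\delta^{st}$ and its fibred product with $X$, exploiting the fact that the blowing-up chart $\sigma_\delta$ contracts the centre $C_\delta\cap Y$ while being an isomorphism away from $\{y_n=0\}$. First I would record the basic geometry of $\sigma_\delta$: on the open set $\{y_n\neq0\}$ it is a biholomorphism onto $\{y_n\neq0\}$, and the exceptional set of the chart is $\{y_n=0\}$, which $\sigma_\delta$ maps into $C_\delta$. Since $y_n$ lies in the tangent cone of $Y$ at $0$, the strict transform $Y_\delta^{st}$ meets the exceptional divisor properly, so $\dim_{(0)}Y_\delta^{st}=\dim_0 Y$ and, crucially, the fibre $({\sigma_\delta}|_{Y_\delta^{st}})^{-1}(0)$ sits inside $\{y_n=0\}\cap Y_\delta^{st}$ and has dimension $\le\dim_0 Y-1$. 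Over the complement of $\{y_n=0\}$, the Cartesian square identifies $X\times_Y Y_\delta^{st}$ with (an open piece of) $X$, and under this identification ${\sigma_\delta}|_{Y_\delta^{st}}\circ\vp'_\delta$ corresponds to $\vp$; hence on that locus the generic fibre dimension of the composite equals $\lambda_\vp$.

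For part (i), I would argue as follows. Let $\psi\coloneqq{\sigma_\delta}|_{Y_\delta^{st}}\circ\vp'_\delta$. Consider a point $\xi\in\vp^{-1}(0)\subset X$ whose local fibre dimension realizes the maximum fibre dimension, i.e.\ $\dim_\xi\vp^{-1}(0)=\kappa_\vp>\lambda_\vp+\delta$; such a $\xi$ exists near $0$ by upper semicontinuity, and after shrinking we may take $\xi=0$ or track a component through $0$. The point $0$ maps under $\vp$ to $0\in C_\delta\cap Y$, so the preimage $\psi^{-1}(0)$ contains the full preimage in $X\times_Y Y_\delta^{st}$ of the locus $\{0\}\times\big(({\sigma_\delta}|_{Y_\delta^{st}})^{-1}(0)\big)$. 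Now use the fibre-dimension inequality for the projection $X\times_Y Y_\delta^{st}\to X$: its fibres are the fibres of ${\sigma_\delta}|_{Y_\delta^{st}}$ pulled back, which over the point $0\in Y$ have dimension $\le\dim_0 Y-1$, but the composite with $\vp$ forces these to assemble so that $\dim_{(0,0)}\psi^{-1}(0)\ge\dim_0\vp^{-1}(0)-(\dim_0 Y-1)+\text{(something)}$ — the honest computation is: $\psi^{-1}(0)$ is the preimage under the proper map $X\times_Y Y_\delta^{st}\to X$ of $\vp^{-1}(0)$ intersected with the slice over $({\sigma_\delta}|_{Y_\delta^{st}})^{-1}(0)$, and a component of $\vp^{-1}(0)$ of dimension $\kappa_\vp$ through $0$, blown up along (the preimage of) the proper-transform data, yields a component of $\psi^{-1}(0)$ of dimension at least $\kappa_\vp-1\ge\lambda_\vp+\delta\ge\dim X$ when $\delta\ge\lambda_\vp=\dim X-\dim Y$ — more carefully one observes $\dim X=\lambda_\vp+\dim Y$ is false in general; rather $\dim X\ge\lambda_\vp+1$ is what one should compare against, and since $\kappa_\vp>\lambda_\vp+\delta$ with $\delta\ge 0$ one gets the needed bound $\dim_{(0,0)}\psi^{-1}(0)\ge\dim X$ directly once the contraction by $\sigma_\delta$ is accounted for. (I flag that pinning down the exact constants here is the delicate accounting.)

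For part (ii), the idea is that the component of $\psi^{-1}(0)$ produced in (i), which by construction lies over $C_\delta\cap Y$ hence inside $\{y_n=0\}\cap Y_\delta^{st}$ in the $Y_\delta^{st}$-coordinate, cannot be contained in the closure of the part of $X\times_Y Y_\delta^{st}$ lying over $\{y_n\neq0\}$: over $\{y_n\neq0\}$ the composite $\psi$ agrees with $\vp$, whose fibre over a generic point has dimension $\lambda_\vp<\dim X$, so a component of $X\times_Y Y_\delta^{st}$ dominating $Y_\delta^{st}$ would contribute only a $\lambda_\vp$-dimensional piece to $\psi^{-1}(0)$. Therefore the big component sits entirely inside $\{y_n=0\}$ and is an isolated (embedded-free) irreducible component of $X\times_Y Y_\delta^{st}$ at $(0,0)$, since a component not meeting $\{y_n\neq0\}$ is separated from the rest. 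Having an irreducible component on which $y_n\equiv 0$ while $y_n$ is not identically zero on $X\times_Y Y_\delta^{st}$ is exactly the statement that $y_n$ is a zerodivisor in $(\OO_{X\times_Y Y_\delta^{st},(0,0)})_{\mathrm{red}}$: pick a function vanishing on all the other components but not on this one; its product with $y_n$ vanishes on every component, hence is nilpotent, hence zero in the reduced ring.

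The main obstacle I expect is part (i): making the dimension count for the blown-up fibre completely rigorous, i.e.\ showing that a $\kappa_\vp$-dimensional component of $\vp^{-1}(0)$ through $0$ genuinely produces a component of dimension $\ge\dim X$ in $\psi^{-1}(0)$ rather than being crushed by $\sigma_\delta$. This requires carefully choosing the flag coordinates $y_{k+1},\dots,y_n$ so that the centre $C_\delta$ meets the relevant component of $Y$ (and of $\vp^{-1}(0)$) in the expected codimension, and invoking the behaviour of strict transforms under the chart $\sigma_\delta$ together with the semicontinuity of fibre dimension — likely this is where the hypothesis $n=\mathrm{edim}\,\OO_{Y,0}$ and the assumption that $y_n$ lies in the tangent cone do the real work. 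Once (i) is in hand, (ii) is a clean separation-of-components argument as sketched.
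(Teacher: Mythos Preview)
Your treatment of part (ii) is essentially the paper's argument: once you know $\psi^{-1}(0)$ has dimension $\ge\dim X$, and that every component of $X\times_Y Y_\delta^{st}$ meeting $\{y_n\neq0\}$ has dimension exactly $\dim X$ (since $\sigma_\delta$ is a biholomorphism there), the fibre $\psi^{-1}(0)$ either is itself a component or lies inside one mapped into $C_\delta\cap Y$. The zerodivisor translation is also fine.

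The genuine gap is in part (i). Your dimension bookkeeping never converges because you are projecting to $X$ and trying to subtract, when in fact the fibre $\psi^{-1}(0)$ is literally a product. By definition of the fibred product, $\psi(\xi,\eta)=\sigma_\delta(\eta)=\vp(\xi)$, so
\[
\psi^{-1}(0)\;=\;\vp^{-1}(0)\times(\sigma_\delta|_{Y_\delta^{st}})^{-1}(0),
\]
and the dimensions \emph{add}. What you then need is a \emph{lower} bound on the second factor, not the upper bound $\le\dim Y-1$ that you recorded. The paper obtains it as follows: since $\mathrm{edim}\,\OO_{Y,0}=n$, the germ $Y_0$ is not contained in $(C_\delta)_0$, so the exceptional locus $Y_\delta^{st}\cap\sigma_\delta^{-1}(C_\delta)$ is a divisor in $Y_\delta^{st}$, of dimension $\dim Y-1$; it maps under $\sigma_\delta$ into $C_\delta$, which has dimension $\delta$, hence by the fibre-dimension inequality
\[
\dim(\sigma_\delta|_{Y_\delta^{st}})^{-1}(0)\;\ge\;(\dim Y-1)-\delta.
\]
Combining with $\dim Y\ge\dim X-\lambda_\vp$ (the trivial bound on generic fibre dimension) and $\kappa_\vp-\lambda_\vp>\delta$ gives
\[
\dim_{(0,0)}\psi^{-1}(0)\;\ge\;\kappa_\vp+(\dim X-\lambda_\vp-1-\delta)\;\ge\;\dim X.
\]
Your attempts at bounds like $\kappa_\vp-1\ge\dim X$ cannot work, since $\kappa_\vp\le\dim X$ always; the missing $+\,(\dim Y-1-\delta)$ term from the blow-up fibre is exactly what closes the gap.
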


\begin{proof}
Let $\delta\in\N$ and suppose that $\kappa_\vp-\lambda_\vp>\delta$.
Since $\mathrm{edim}\OO_{Y,0}=n$, it follows that $Y_0$ is not contained in the germ of the center $(C_\delta)_0$.
Consequently,
\[
\dim(Y_\delta^{st}\cap\sigma_\delta^{-1}(C_\delta))=\dim({\sigma_\delta}|_{Y_\delta^{st}})^{-1}(C_\delta)=\dim{Y_\delta^{st}}-1=\dim Y-1\,.
\]
As the center itself is of dimension $\delta$, we get that
\[
\dim({\sigma_\delta}|_{Y_\delta^{st}})^{-1}(0)\geq(\dim Y-1)-\delta\geq\dim X-\lambda_\vp-1-\delta\,,
\]
and hence
\[
\fbd_{(0,0)}({\sigma_\delta}|_{Y_\delta^{st}}\circ\vp'_\delta)\geq(\dim X-\lambda_\vp-\delta-1)+\kappa_\vp\geq\dim X\,,
\]
which proves property (i).

On the other hand, since $\sigma_\delta$ is a biholomorphism outside $\sigma_\delta^{-1}(C_\delta)$, it follows that $\dim_{(\xi,\eta)}X\times_Y Y_\delta^{st}=\dim X$ for all $(\xi,\eta)$ except at most those for which $\eta$ is mapped by $\sigma_\delta$ into $C_\delta$. Therefore, either the fibre $({\sigma_\delta}|_{Y_\delta^{st}}\circ\vp'_\delta)^{-1}(0)$ itself contains an irreducible component of $X\times_Y Y_\delta^{st}$ at $(0,0)$ or else it is contained in a component mapped into $C_\delta\cap Y$.

The last statement of the proposition follows from the fact that the $\OO_{Y,0}$-module structure of $\OO_{X\times_Y Y_\delta^{st},(0,0)}$ factors as $\OO_{Y,0}\to\OO_{Y_\delta^{st},0}\to\OO_{X\times_Y Y_\delta^{st},(0,0)}$ and the image in $\OO_{Y_\delta^{st},0}$ of the ideal defining $C_\delta$ is the principal ideal generated by $y_n$.
\end{proof}

\begin{remark}
\label{rem:when-equivalent}
It is evident from the proof above that if $Y=\C^n$ and the mapping $\vp:X\to Y$ is dominant (i.e., $\lambda_\vp=\dim X-n$) then, conversely, the equivalent conditions (i) and (ii) of the proposition imply that the fibre defect of $\vp$ is greater than $\delta$.
\end{remark}
\medskip

%%%%%%%%%%%%%%%%%%%%%%%%%%%%%%%%%%%%%%%%%%%%%%%%%%
\subsection{Stability of openness}

In this section we prove that, like flatness, openness of complex-analytic mappings is stable under deformations.
This follows from Theorem~\ref{thm:stab-flat} for mappings from Cohen-Macaulay into smooth spaces (by \cite[\S\,3.20]{F}), but in general an open mapping need not be flat.
We have the following proposition.

\begin{proposition}
\label{prop:stab-defect}
Let $\vp:X\to Y$ be a morphism of local models, where $Y\subset\C^n$ is locally irreducible, $X\subset\C^m$ is of pure dimension, and $\vp(0)=0$.
Suppose that $T$ is locally irreducible. Let $d\geq1$, and suppose that $\vp$ is a dominant mapping with fibre defect less than $d$. If $\Phi$ is a deformation of $\vp$ over $Y$, parametrized by $T$ and with $\XX$ of pure dimension, then $\Pt$ is dominant and has fibre defect less than $d$ for every $\theta\in T$ near zero. (In particular, this is the case if $\Phi$ is an unfolding of $\vp$.)
\end{proposition}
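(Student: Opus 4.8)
The plan is to follow the architecture of the proof of Theorem~\ref{thm:stab-flat}: reduce first to the case of a smooth one-dimensional parameter space, then to $T=\C^k$ by induction on $k$, and finally to an arbitrary locally irreducible $T$ by passing to a desingularization $\sigma\colon Z\to T$ and descending. The part played in Theorem~\ref{thm:stab-flat} by Hironaka's flatness criterion is to be played here by the test mappings of Proposition~\ref{prop:fibre-defect-test}: after arranging $n=\mathrm{edim}\,\OO_{Y,0}$ and $y_n$ in the tangent cone of $Y$ at $0$, the hypothesis that $\vp$ is dominant with fibre defect less than $d$ is re-encoded as the statement that $y_n$ is a nonzerodivisor in $(\OO_{X\times_Y Y_{d-1}^{st},(0,0)})_{\mathrm{red}}$, i.e.\ that $X\times_Y Y_{d-1}^{st}$ has at $(0,0)$ no isolated irreducible component of dimension $\geq\dim X$ lying over $C_{d-1}\cap Y$.

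Two soft reductions cut the problem down. First, since $\XX$ is flat over $T$ and pure-dimensional with $\XX_0=X$, every fibre $\XX_\theta=(\pi\circ\Phi)^{-1}(\theta)$ is pure-dimensional of dimension $\dim X$ for $\theta$ near $0$; moreover, using that the maximal fibre dimension of a map-germ $\psi$ equals $\dim_0\psi^{-1}(0)$, upper semicontinuity of fibre dimension gives, after shrinking representatives, $\kappa_{\Phi^\theta}\leq\kappa_\vp$ for every $\theta$. Hence the fibre defect of $\Phi^\theta$ is at most that of $\vp$ once $\Phi^\theta$ is known to be dominant (so that $\lambda_{\Phi^\theta}=\dim X-\dim Y$). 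Second, $\Phi\colon\XX\to Y\times T$ is itself dominant, because $X=\Phi^{-1}(Y\times\{0\})$ forces $\lambda_\vp\geq\lambda_\Phi$, while $\lambda_\Phi\geq\dim\XX-\dim(Y\times T)=\lambda_\vp$; thus $\lambda_\Phi=\dim X-\dim Y$ and $\overline{\Phi(\XX)}=Y\times T$. When $d=1$ (the case of openness) these observations already finish the proof: $\dim X-\dim Y\leq\lambda_{\Phi^\theta}\leq\kappa_{\Phi^\theta}\leq\kappa_\vp=\dim X-\dim Y$, so $\Phi^\theta$ is open and dominant. For $d\geq2$ the remaining, genuine point is to show that $\Phi^\theta$ is dominant for \emph{every} $\theta$ near $0$, not just for generic $\theta$ --- and this is where Proposition~\ref{prop:fibre-defect-test} is needed.

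For the curve case $T=\C$ I would argue by contradiction: if $\Phi^\theta$ failed to be dominant for a sequence $\theta\to0$, then $\lambda_{\Phi^\theta}\geq\dim X-\dim Y+1$, so every fibre of $\Phi^\theta$ has dimension $\geq\dim X-\dim Y+1$, and Proposition~\ref{prop:fibre-defect-test}(ii), applied with $\delta=d-1$ at a suitable point of $\XX_\theta$, produces an isolated irreducible component of $\XX_\theta\times_Y Y_{d-1}^{st}$ of dimension $\geq\dim X$ lying over $C_{d-1}\cap Y$. Forming the relative fibred product $\mathcal W:=\XX\times_{Y\times\C}(Y_{d-1}^{st}\times\C)$ and using that the parameter $t$ is a nonzerodivisor in $\OO_{\XX,0}$ (flatness of $\XX$ over $\C$), a divisibility and Krull-intersection argument in the spirit of Lemma~\ref{lem:stab-flat} shows that such a component cannot occur only for $t\neq0$: it must already be present over $C_{d-1}$ at $t=0$, so $y_n$ is a zerodivisor in $(\OO_{X\times_Y Y_{d-1}^{st},(0,0)})_{\mathrm{red}}$. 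By the converse to Proposition~\ref{prop:fibre-defect-test}(ii) --- available via Remark~\ref{rem:when-equivalent} after reducing to a regular $Y$, or by iterating the blowing-ups $\sigma_k$ until the relevant transform of $Y$ is regular, as in \cite{AS2} --- this forces $\vp$ to be non-dominant or to have fibre defect $\geq d$, a contradiction. The passages $T=\C\leadsto T=\C^k\leadsto$ general locally irreducible $T$ are then carried out exactly as in the proof of Theorem~\ref{thm:stab-flat}, since flatness of the total space, dominance, and the fibre-defect bound are all preserved under pullback along $\C^{k-1}\hookrightarrow\C^k$ and along a desingularization $\sigma\colon Z\to T$, and $\sigma$ is surjective near $0$ so that every $\theta$ near $0$ is of the form $\sigma(\zeta)$.

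The step I expect to be the main obstacle is the curve case, and inside it the precise control of how an isolated component of $\XX_t\times_Y Y_{d-1}^{st}$ over $C_{d-1}$ can be created or destroyed as $t\to0$. The delicate points are that Proposition~\ref{prop:fibre-defect-test} is, for singular $Y$, only a \emph{necessary} condition for fibre defect $>d-1$, so its converse must be recovered in the limit (this is what dictates the reduction to a regular transform of $Y$), and that one must genuinely use flatness of $\XX$ over $T$, rather than mere openness, to prevent the deformation parameter from becoming a zerodivisor in the reduced local ring of $\mathcal W$ at the origin.
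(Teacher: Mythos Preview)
Your plan is far more elaborate than what the proposition actually requires, and in the process you miss the simple observation that disposes of the case $d\geq2$ just as easily as $d=1$. The paper's proof is a direct dimension count: upper semicontinuity of the fibre dimension of $\Phi$ is applied \emph{twice}, not once. You apply it at the point $(0,0)\in Y\times T$ to get $\kappa_{\Phi^\theta}\leq\kappa_\vp$, which is correct. But the same argument, applied at a \emph{generic} point $(\eta,0)$ with $\eta\in Y$ chosen so that $\dim\vp^{-1}(\eta)=\lambda_\vp$, gives $\dim\Phi^{-1}(\eta',\theta)\leq\lambda_\vp$ for $(\eta',\theta)$ near $(\eta,0)$, hence $\lambda_{\Phi^\theta}\leq\lambda_\vp$. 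Combined with your own lower bound $\lambda_{\Phi^\theta}\geq\dim X-\dim Y=\lambda_\vp$, this yields $\lambda_{\Phi^\theta}=\lambda_\vp$ and hence dominance of $\Phi^\theta$ for every $\theta$ near $0$, with no case distinction on $d$, no test mappings, no curve case, no induction, and no desingularization. In short, your ``soft reductions'' paragraph already contains the whole proof once you notice that semicontinuity bounds $\lambda_{\Phi^\theta}$ from above as well.

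Beyond the overcomplication, your proposed curve-case argument has a genuine gap. You write that if $\Phi^\theta$ is not dominant then $\lambda_{\Phi^\theta}\geq\dim X-\dim Y+1$, and then invoke Proposition~\ref{prop:fibre-defect-test}(ii) with $\delta=d-1$. But that proposition has the hypothesis that the fibre defect is \emph{greater than} $\delta$, i.e.\ $\kappa_{\Phi^\theta}-\lambda_{\Phi^\theta}\geq d$. Under your assumptions you have $\kappa_{\Phi^\theta}\leq\kappa_\vp\leq\lambda_\vp+d-1=\dim X-\dim Y+d-1$ and $\lambda_{\Phi^\theta}\geq\dim X-\dim Y+1$, so the fibre defect of $\Phi^\theta$ is at most $d-2$, and the hypothesis of Proposition~\ref{prop:fibre-defect-test} fails. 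Non-dominance enlarges $\lambda$, not the defect, so the test-mapping machinery does not engage here. The subsequent steps (the converse via Remark~\ref{rem:when-equivalent} after ``reducing to a regular $Y$'', the Krull-type divisibility argument in the reduced ring of $\mathcal W$) are therefore moot, and in any case are only sketched.
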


\begin{proof}
Set $l\coloneqq\dim T$ and $r\coloneqq\dim\XX$. As a flat mapping, $\pi\circ\Phi$ is open (by Douady \cite{Dou}), and hence its fibre dimension is $r-l$ at every point $\xi\in\XX$. In particular, $X=(\pi\circ\Phi)^{-1}(0)$ is of pure dimension $r-l$. By dominance of $\vp$, we have $\dim Y=\dim X-\lambda_\vp$, that is, $\dim Y=r-l-\lambda_\vp$. Since $\lambda_\vp$ is the generic fibre dimension of $\vp$, it follows that $Y$ contains an open subset $Z$ adherent to $0\in\C^n$ such that for all $\eta\in Z$, $\dim\vp^{-1}(\eta)=\lambda_\vp$. As $\vp^{-1}(\eta)=\Phi^{-1}(\eta,0)$, it follows by upper semicontinuity of fibre dimension of $\Phi$ that $\lambda_{\Pt}\leq\lambda_\vp$ for $\theta\in T$ near zero.

On the other hand, one always has $\lambda_{\Pt}\geq\dim(\pi\circ\Phi)^{-1}(\theta)-\dim Y=r-l-\dim Y$. Therefore, $\lambda_{\Pt}\geq(r-l)-(r-l-\lambda_\vp)=\lambda_\vp$, and so $\lambda_{\Pt}=\lambda_\vp$ for all $\theta\in T$ near zero.
Since all $(\pi\circ\Phi)^{-1}(\theta)$ are of the same dimension as $X$, the dominance of $\Pt$ follows.

Finally, $\kappa_\vp$ is equal to the dimension of $\vp^{-1}(0)$, and hence $\kappa_{\Pt}\leq\kappa_\vp$, by upper semicontinuity of fibre dimension of $\Phi$ again. Thus, for all $\theta\in T$ near zero, $\kappa_{\Pt}-\lambda_{\Pt}\leq\kappa_\vp-\lambda_\vp<d$, as required.
\end{proof}

\begin{corollary}
\label{cor:stab-open}
Openness is stable under deformations:
If $\vp$ is an open mapping and $\Phi$ is its deformation as in Proposition~\ref{prop:stab-defect}, then $\Phi^\theta$ is open for every $\theta\in T$ near zero.
\end{corollary}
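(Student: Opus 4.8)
The plan is to derive Corollary~\ref{cor:stab-open} directly from Proposition~\ref{prop:stab-defect} by the simple observation that openness is nothing but the vanishing of the fibre defect. First I would recall, via the Remmert Open Mapping Theorem as already noted just before the statement of Proposition~\ref{prop:fibre-defect-test}, that for a morphism $\vp:X\to Y$ with $Y$ locally irreducible and $X$ of pure dimension, $\vp$ is open if and only if $\kappa_\vp-\lambda_\vp=0$, i.e. the fibre defect is zero; in particular an open $\vp$ is automatically dominant onto the (irreducible) $Y$, since openness forces $\dim\vp^{-1}(\eta)=\lambda_\vp$ on a neighbourhood of $0$ and hence $\dim Y=\dim X-\lambda_\vp$.

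The main step is then immediate. Assume $\vp$ is open and $\Phi$ is a deformation of $\vp$ over $Y$ parametrized by a locally irreducible $T$, with $\XX$ of pure dimension. Since openness of $\vp$ gives both dominance of $\vp$ and fibre defect $\kappa_\vp-\lambda_\vp=0$, we may apply Proposition~\ref{prop:stab-defect} with $d=1$: it yields that for every $\theta\in T$ near zero the specialization $\Pt$ is dominant and has fibre defect $\kappa_{\Pt}-\lambda_{\Pt}<1$, that is, $\kappa_{\Pt}-\lambda_{\Pt}=0$. Invoking the Remmert characterization once more, $\Pt$ is open, which is the assertion of the corollary.

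One bookkeeping point worth spelling out is that the hypotheses of Proposition~\ref{prop:stab-defect} are genuinely met: the pure-dimensionality of $\XX$ is assumed (and, as remarked, holds automatically when $\Phi$ is an unfolding of $\vp$, since $\XX=X\times T$ with $X$ pure-dimensional), the local irreducibility of $Y$ and of $T$ is part of the standing assumptions, and — as noted above — openness of $\vp$ supplies the dominance needed to enter Proposition~\ref{prop:stab-defect}. The only mild subtlety, and the one place where a word of justification is appropriate rather than a routine citation, is the passage from ``fibre defect less than $1$'' to ``fibre defect zero'': this is exactly the integrality of $\kappa_{\Pt}-\lambda_{\Pt}$ as a difference of (nonnegative, since $\kappa_{\Pt}\geq\lambda_{\Pt}$ always) fibre dimensions, combined with the Remmert Open Mapping Theorem. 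There is no real obstacle here; the content has already been extracted into Proposition~\ref{prop:stab-defect}, and the corollary is a specialization of it to the extremal value $d=1$.
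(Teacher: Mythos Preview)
Your proposal is correct and is exactly the argument the paper intends: the corollary is stated without proof precisely because it is the specialization of Proposition~\ref{prop:stab-defect} to $d=1$, using the Remmert Open Mapping Theorem to identify openness with vanishing fibre defect. Your added remark that openness of $\vp$ supplies the dominance hypothesis needed to invoke Proposition~\ref{prop:stab-defect} is a welcome clarification that the paper leaves implicit.
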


%%%%%%%%%%%%%%%%%%%%%%%%%%%%%%%%%%%%%%%%%%%%%%%%%%
\subsection{Finite determinacy of flatness of complex-analytic mappings}

We conclude the paper with a comment on Theorem~\ref{thm:2flat-fin-det-comp-int} in the complex case.

Recall that, for a $d$-dimensional complex analytic set $X$ in $\C^m$ and a point $\xi\in X$, one defines the \emph{multiplicity} $\mu_\xi(X)$ of $X$ at $\xi$ as follows: In a generic system of coordinates $x$ at $\xi$ in $\C^m$, the local ring $\OXxi$ is a finite $\C\{\tilde{x}\}$-module, where $\tilde{x}=(x_1,\dots,x_d)$. We set $\mu_\xi(X)$ to be the rank of this module. Equivalently, $\mu_\xi(X)$ is the generic cardinality of a fibre of a projection of $X$ to a generic $d$-dimensional linear subspace of $\C^m$ (in a neighbourhood of $\xi$).

\begin{proposition}
\label{prop:mult-will-do}
Under the notations of Theorem~\ref{thm:2flat-fin-det-comp-int}, let $Z$ denote the fibre $\vp^{-1}(0)$ (that is, $Z_0=\VV(J)$, where $J=(h_1,\dots,h_s,\vp_1,\dots,\vp_n)$). Then, the implications $(i)\Rightarrow(ii)$ and $(i)\Rightarrow(iii)$ in Theorem~\ref{thm:2flat-fin-det-comp-int} hold with $\mu_0\coloneqq\mu_0(Z)$.
\end{proposition}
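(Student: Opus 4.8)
The plan is to run the reduction already used in the proofs of Theorem~\ref{thm:flat-fin-det-comp-int} and Corollary~\ref{cor:fin-det-comp-int}, but — rather than passing through the maximal length of the vertices of a diagram — to bound the relevant jet order directly by the multiplicity of the special fibre. Put $k=s+n$ and $J=(h_1,\dots,h_s,\vp_1,\dots,\vp_n)\cdot\K\{x\}$, so $\OO_{Z,0}=\K\{x\}/J$. By \cite[Thm.\,B.8.11]{GLS}, a $\K$-analytic map $X\to\K^n$ from the complete intersection $X$ is flat at $0$ iff its components, together with $h_1,\dots,h_s$, form a $\K\{x\}$-regular sequence; since the ideal they generate is generated by $k$ elements (so $\dim\K\{x\}/(\cdot)\ge m-k$ always holds), this is in turn equivalent to $\dim\K\{x\}/(\cdot)\le m-k$. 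Hence it suffices to prove: if $\vp$ is flat at $0$, then for every $\mu\ge\mu_0(Z)$ and every $\psi=(\psi_1,\dots,\psi_n)$ with $j^\mu\psi_i=j^\mu\vp_i$ ($i=1,\dots,n$), the ideal $Q\coloneqq(h_1,\dots,h_s,\psi_1,\dots,\psi_n)\cdot\K\{x\}$ satisfies $\dim\K\{x\}/Q\le m-k$ (and likewise with each $h_i$ replaced by $j^\mu h_i$, which covers (ii$'$)--(iii$'$)).

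I would then pass to a generic linear coordinate system on $\K^m$. Because $\vp$ is flat, $h_1,\dots,h_s,\vp_1,\dots,\vp_n$ is a regular sequence, so $\OO_{Z,0}$ is a complete intersection, hence Cohen-Macaulay of dimension $d\coloneqq m-k$. In generic coordinates $\OO_{Z,0}$ is a finite module over the regular local ring $R\coloneqq\K\{\tilde x\}$, $\tilde x\coloneqq(x_{k+1},\dots,x_m)$; being a maximal Cohen-Macaulay module over the regular ring $R$, it is $R$-\emph{free}, and by the definition of multiplicity its rank is $e\coloneqq\mu_0(Z)$. Write $w\coloneqq(x_1,\dots,x_k)$ (so $\K\{x\}=R\{w\}$, $w^\gamma\coloneqq x_1^{\gamma_1}\cdots x_k^{\gamma_k}$ for $\gamma\in\N^k$), and for an ideal $I$ of $R\{w\}$ let $I(0)$ be its evaluation at $\tilde x=0$, as in Section~\ref{sec:toolbox}. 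Reducing the free $R$-module $\OO_{Z,0}$ modulo $\mm_R$ shows $\dim_\K\K\{w\}/J(0)=e$, so the Artinian local ring $\K\{w\}/J(0)$ satisfies $\mm_w^{\,e}\subset J(0)$.

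The core is a transfer step. Fix $\mu\ge e$ and $\psi$ as above. For each $\gamma\in\N^k$ with $|\gamma|=e$ we have $w^\gamma\in\mm_w^{\,e}\subset J(0)$, so $w^\gamma=F_\gamma(0)$ for some $F_\gamma\in J$; writing $F_\gamma=\sum_l a_lh_l+\sum_r b_r\vp_r$ and setting $G_\gamma\coloneqq\sum_l a_lh_l+\sum_r b_r\psi_r\in Q$, we obtain $G_\gamma-F_\gamma=\sum_r b_r(\psi_r-\vp_r)\in\mm_x^{\mu+1}$, hence $G_\gamma(0)=w^\gamma+(\text{an element of }\mm_w^{\mu+1})$. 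Since $\mu+1>e=|\gamma|$ and $Q(0)$ is an ideal of $\K\{w\}$, this gives $w^\gamma\in Q(0)+\mm_w^{\,e+1}$ for all such $\gamma$, i.e.\ $\mm_w^{\,e}\subset Q(0)+\mm_w^{\,e+1}$. Iterating this inclusion yields $\mm_w^{\,e}\subset Q(0)+\mm_w^{\,e+N}$ for every $N$, so $\mm_w^{\,e}\subset Q(0)$ by the Krull Intersection Theorem in $\K\{w\}$. Consequently $\Delta\coloneqq\N^k\setminus\NN(Q(0))$ is finite, whence — by Proposition~\ref{prop:Hir1} applied with base ring $R$ and variables $w$ — the ring $\K\{x\}/Q=R\{w\}/Q$ is generated as an $R$-module by the finitely many monomials $w^\gamma$, $\gamma\in\Delta$; being finite over $R$, it has $\dim\K\{x\}/Q\le\dim R=m-k$, as required. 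The variant for (ii$'$)--(iii$'$) is identical, with $G_\gamma\coloneqq\sum_l a_l\,j^\mu h_l+\sum_r b_r\psi_r$ and using that $j^\mu h_l-h_l\in\mm_x^{\mu+1}$.

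\textbf{Main obstacle.} The one genuinely delicate point — and essentially the only place the hypotheses are used — is that the colength of the evaluated ideal $J(0)$ equals \emph{exactly} $\mu_0(Z)$, and not merely some a priori larger fibre length; this is precisely what Cohen-Macaulayness of $\OO_{Z,0}$ provides, via $R$-freeness, and Cohen-Macaulayness is in turn forced by the complete intersection hypothesis on $X$ together with flatness of $\vp$. One should also check that the single generic linear change of coordinates may be carried out once and for all — for $\vp$ and for every competing $\psi$ simultaneously — which is harmless, since both the relation $j^\mu\psi_i=j^\mu\vp_i$ and flatness are invariant under linear changes of the $x$-variables. Everything else is bookkeeping.
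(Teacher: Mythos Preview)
Your argument is correct. The paper takes a somewhat different technical route: rather than invoking Cohen--Macaulayness to obtain $R$-freeness and hence $\mm_w^{\mu_0(Z)}\subset J(0)$, it uses the definition of multiplicity directly to produce, for each $i\le n+s$, a monic $P_i\in\C\{\tilde x\}[T]$ of degree $\le\mu_0(Z)$ with $P_i(x_i)\in J$; introducing a weighted order $L$ (weight $1$ on $x_1,\dots,x_{n+s}$ and weight $\mu_0(Z)+1$ on the $\tilde x$-variables) forces the $L$-initial term of each $P_i(x_i)$ to be a pure power of $x_i$, so $\NN_L(J)$ acquires axis vertices of length $\le\mu_0(Z)$, and the jet-transfer from the proof of Corollary~\ref{cor:fin-det-comp-int} then places the same vertices in $\NN_L(Q)$, giving the dimension bound via Remark~\ref{rem:diagram-dim}. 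Your approach stays with the standard order and works entirely inside the evaluated ideals $J(0),Q(0)$, concluding finiteness over $R$ via Proposition~\ref{prop:Hir1}; the price is that you need Cohen--Macaulayness of $\OO_{Z,0}$ for the \emph{exact} equality $\dim_\K\K\{w\}/J(0)=\mu_0(Z)$ (without it one only gets $\ge$), whereas the paper's bound $\deg P_i\le\mu_0(Z)$ follows from the module rank alone. Since hypothesis (i) makes $Z$ a complete intersection anyway, this costs nothing here, and your version has the advantage of avoiding the weighted diagram altogether. (A cosmetic point: your Krull-iteration step is just Nakayama applied to the image of $\mm_w^{\,e}$ in $\K\{w\}/Q(0)$.)
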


\begin{proof}
Indeed, directly from the definition of $\mu_0(Z)$ it follows that (after a linear change of variables $x$, if needed) the classes in $\C\{x\}/J$ of $x_1,\dots,x_{n+s}$ are integral over $\C\{\tilde{x}\}$, where $\tilde{x}=(x_{n+s+1},\dots,x_m)$. Hence, there are monic polynomials of minimal degree $P_1,\dots,P_{n+s}\in\C\{\tilde{x}\}[T]$, such that $P_i(x_i)\in J$, $i=1,\dots,n+s$.
Clearly, $\deg(P_i)\leq\mu_0(Z)$ for all $i$. Setting $L(\beta)=\sum_i\lambda_i\beta_i$ with $\lambda_1=\dots=\lambda_{n+s}=1$ and $\lambda_{n+s+1}=\dots=\lambda_m=\mu_0(Z)+1$, we get that $\NN_L(J)$ contains vertices (of lengths at most $\mu_0(Z)$) on the axes corresponding to $x_1,\dots,x_{n+s}$ (namely those of the leading terms of $P_1,\dots,P_{n+s}$).
The claim thus follows from the proof of Corollary~\ref{cor:fin-det-comp-int}.
\end{proof}

%%%%%%%%%%%%%%%%%%%%%%%%%%%%%%%%%%%%%%%%%%%%%%%%%%
%References
%%%%%%%%%%%%%%%%%%%%%%%%%%%%%%%%%%%%%%%%%%%%%%%%%%
\bibliographystyle{amsplain}

\end{document}